\definecolor{webgreen}{rgb}{0,.5,0}
\definecolor{webbrown}{rgb}{.6,0,0}
\numberwithin{equation}{section}
\theoremstyle{plain}
\newtheorem{thm}{\protect\theoremname}
\theoremstyle{plain}
\theoremstyle{definition}
\newtheorem{prop}{Proposition}[section]
\theoremstyle{plain}
\newtheorem*{prop*}{\protect\propositionname}
\theoremstyle{remark}
\newtheorem{rem}{Remark}[section]
\theoremstyle{plain}
\newtheorem{cor}{Corollary}[prop]
\providecommand{\propositionname}{Proposition}
\providecommand{\theoremname}{Theorem}
\newcommand{\seqnum}[1]{\href{https://oeis.org/#1}{\rm \underline{#1}}}
\begin{document}

\pagestyle{plain}
\numberwithin{equation}{section}
\title{An Extension of Glasser's Master Theorem and a Collection of Improper Integrals Many of Which Involve Riemann's Zeta Function}
\author{ Michael Milgram\footnote{mike@geometrics-unlimited.com}\\{Consulting Physicist, Geometrics Unlimited, Ltd.}}
\maketitle
\begin{flushleft} \vskip 0.3 in 
\centerline{Box 1484, Deep River, Ont. Canada. K0J 1P0}
\centerline{Author's manuscript, Orcid:0000-0002-7987-0820}
\vskip .2in
January 16, 2024: original\\
\today: minor revisions and corrections
\vskip .2in

MSC classes: 30E20; 33E20; 44A99	
\vskip 0.1in
Keywords: Analytic Integration, analysis, Master theorem, Integrals of Zeta function, Integrals of arbitrary functions

\vskip 0.1in 
\centerline{\bf Abstract}\vskip .3in

Glasser's Master Theorem \cite{GlasserMaster} is essentially a restatement of Cauchy's integral Theorem reduced to a specialized form. Here we extend that theorem by introducing two new parameters, but still retain a simple form.  Because of wide interest in entities involving Riemann's zeta function, the focus is on the evaluation of improper integrals with almost arbitrary integrands involving that function, but we also consider some other instances, the purpose being to demonstrate the power of the extended theorem. This is achieved by the presentation of a large number of examples that illustrate the ubiquity of the range of possibilities. One simple outcome of the study is the use of the extended theorem to show how it is possible to evaluate an integral when series or other representations of an integrand function do not converge. 

\section{Introduction} \label{sec:Intro}

Over the years, many ``Master Theorems" have been devised that allow the analytic integration of a large class of almost-arbitrary functions, the quintessential ones being those of Cauchy \cite[Chapter 5]{Knopp1} and Ramanujan \cite{RaMaster}, \cite{BradVig}. Here, we are interested in such a Theorem recently introduced by Glasser \cite{GlasserMaster} and studied by Glasser and Milgram \cite{Master}, \cite{2015arXiv150506515G}, where many specific and general examples were given. Considering the importance of the subject, it is worth noting that a survey of other ``Master" theorems was included in \cite{Master}; further citations to the literature on this subject can be found in \cite{ASQ}, \cite{CamAb}, \cite{SAQK} and \cite{ASQ2} where other, newly derived ``Master" theorems are discussed, and their variations explored.\newline

Along with the search for new methods of integrating general functions, interest in Riemann's zeta function $\zeta(s)$ continues unabated, with specific interest here focussed on improper integrals when $\zeta(s)$ is included in the integrand. Such evaluations are rare, perhaps because it is not always possible to ascertain whether such integrals converge numerically; in such cases one must fall back on the use of analytic continuation and regularization techniques borrowed from quantum field theory in order to attach meaning to such objects (see \cite[Appendix C]{Mdet}). However, in cases where convergence is not an issue, improper integrals are known \cite{Mdet}; further, proper integrals involving $\zeta(s)$ can be found listed in the literature \cite{HuKimKim},\cite{Esp&Moll2002} and \cite{Shpot&Paris}. Here we are interested in both extending Glasser's Master theorem and employing it to evaluate improper integrals that mostly involve $\zeta(s)$ and other functions. One of the interesting discoveries is a novel method of evaluating improper integrals when the series representation of an integrand function does not converge.

\section{The basics} \label{sec:Basics}
In Glasser's original work \cite{GlasserMaster}, a master integration identity
\begin{equation}
\int_{-\infty}^{\infty}\frac{F \! \left(a\,v \left(v +i \right)\right)}{\cosh \! \left(\pi \,v \right)}d v =F(a/4)
\label{Exf}
\end{equation}

was presented, which can be restated to read

\begin{equation}
\int_{-\infty}^{\infty}F \! \left(v \right)d v =-\pi i 
\sum_j\; \mathit{R} \! \left(F \! \left(v_{j} \right)\right)_{v_{ j}\in \mathfrak{S}}\,.
\label{Exf2}
\end{equation}

The validity of \eqref{Exf2} is subject to the condition that the integral exists and $F(v)$ is meromorphic in the strip

\begin{equation}
\mathfrak{S}\equiv -1< \Im(v) < 0\,,
\label{Sdef}
\end{equation}

where $\mathit{R} \! \left(F \! \left(v_{j} \right)\right)$ denotes the residue of $F(v_{j})$ for each point $v_{j}$ that lies in $\mathfrak{S}$, provided also that $F(v)$ satisfies the transformation 
\begin{equation}
F(v)+F(-v-i)=0\,.
\label{InvDef}
\end{equation}

The principle upon which that result was based is here generalized, with $0<b\in\Re$, by extending the definition of $\mathfrak{S}$ as follows:
\begin{equation}
\mathfrak{S}_{b}\equiv -b< \Im(v) < 0
\label{Sdefb}
\end{equation}
to yield a generalized Master equation:

\begin{thm}
If $F(v)$ is meromorphic in $\mathfrak{S}_{b}$, the integral exists and
\begin{equation}
F \! \left(v \right)+F \! \left(-i\,b -v \right) = 0\,,
\label{Crit}
\end{equation}
then
\begin{equation}
\int_{-\infty}^{\infty}F \! \left(v \right)d v = 
-\pi i  \sum_j\; \mathit{R} \! \left(F \! \left(v_{j} \right)\right)_{v_{ j}\in \mathfrak{S}_{\;b}}\,.
\label{Master}
\end{equation}
\end{thm}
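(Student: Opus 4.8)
The plan is to realize \eqref{Master} as a residue computation over a rectangular contour that exactly bounds the strip $\mathfrak{S}_b$. For $R>0$ I would integrate $F$ counterclockwise around the rectangle $C_R$ with vertices $-R$, $R$, $R-ib$, $-R-ib$; since $F$ is meromorphic in $\mathfrak{S}_b$ and, by hypothesis, its poles $v_j$ lie strictly inside, the residue theorem gives $\oint_{C_R}F(v)\,dv = 2\pi i\sum_j \mathit{R}\!\left(F(v_j)\right)$ once $R$ is large enough to capture every pole. The task then reduces to identifying the four edge contributions in the limit $R\to\infty$.

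The top edge runs along the real axis from $R$ to $-R$, contributing $-\int_{-R}^{R}F(v)\,dv \to -\int_{-\infty}^{\infty}F(v)\,dv$, which is exactly $(-1)$ times the integral we want. The decisive step is the bottom edge $v=t-ib$, $t:-R\to R$: here I would invoke \eqref{Crit} in the form $F(t-ib) = -F\bigl(-ib-(t-ib)\bigr) = -F(-t)$, and then substitute $s=-t$ to obtain $\int_{-R}^{R}F(t-ib)\,dt = -\int_{-R}^{R}F(s)\,ds \to -\int_{-\infty}^{\infty}F(v)\,dv$. Thus both horizontal edges reproduce the same quantity, so their combined limit is $-2\int_{-\infty}^{\infty}F(v)\,dv$; equating this with $2\pi i\sum_j\mathit{R}\!\left(F(v_j)\right)$ and dividing by $-2$ yields \eqref{Master}.

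The main obstacle is the pair of vertical edges $v=\pm R+is$, $s\in[-b,0]$, whose contributions must vanish as $R\to\infty$. The stated hypothesis that ``the integral exists'' only controls $F$ on the real axis, so to make the argument fully rigorous I would need a decay estimate uniform across the strip, for instance $F(\pm R+is)\to 0$ uniformly in $s\in[-b,0]$, or an integrability bound guaranteeing $\int_{-b}^{0}F(\pm R+is)\,i\,ds \to 0$. In the concrete applications this decay is supplied by the explicit structure of $F$ (typically an exponential or $\cosh$ factor in the denominator, as in \eqref{Exf}). A secondary point is the treatment of poles on the boundary lines $\Im(v)=0$ or $\Im(v)=-b$: since the statement restricts the sum to $v_j\in\mathfrak{S}_b$, I would assume no poles lie on $C_R$, or else indent the contour and account for the resulting half-residues. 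Apart from this decay and boundary bookkeeping, the result is a direct consequence of the functional equation \eqref{Crit} combined with the residue theorem.
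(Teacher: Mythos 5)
Your proof is correct and takes essentially the same route as the paper: the paper integrates $F$ around the boundary of $\mathfrak{S}_{b}$ (the real axis plus the line $\Im(v)=-b$ with opposite orientation), uses the substitution $v\mapsto -i\,b-v$ together with \eqref{Crit} to fold the lower edge into another copy of $\int_{-\infty}^{\infty}F(v)\,dv$, and applies the residue theorem to obtain $2\int_{-\infty}^{\infty}F(v)\,dv=-2\pi i\sum_j \mathit{R}\!\left(F(v_j)\right)$, which is exactly your computation with the finite rectangle and the limit $R\to\infty$ made explicit. The decay of $F$ on the vertical segments, which you rightly flag as an extra hypothesis beyond ``the integral exists,'' is silently assumed in the paper's proof as well, so your version is if anything the more careful rendering of the same argument.
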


\begin{proof}

Consider the contour integral that encloses $\mathfrak{S}_{b}$

\begin{equation}
J\equiv \,\int_{-\infty}^{\infty}F \! \left(v \right)d v +\int_{\infty -i\,b}^{-\infty -i\,b}F \! \left(v \right)d v\,.
\label{Jtot}
\end{equation}
Following a change of variables, \eqref{Jtot} becomes

\begin{equation}
J=\,\int_{-\infty}^{\infty}\left(F \! \left(v \right)-F \! \left(-i\,b -v \right)\right)d v=\,-2\pi i 
\sum_j\; \mathit{R} \! \left(F \! \left(v_{j} \right)\right)_{v_{ j}\in \mathfrak{S}_{b}}\,,
\label{Jtot2}
\end{equation}
and if \eqref{Crit} is true, then \eqref{Master} immediately follows.
\end{proof}

\begin{cor}

Further, with $h(v)\in\mathfrak{C}$ and continuous and 
\begin{equation}
F \! \left(v \right)-\,F \! \left(-i\,b -v \right) =h(v)\,F \! \left(v \right)
\label{Crit2}
\end{equation}
then
\begin{equation}
\int_{-\infty}^{\infty}F \! \left(v \right)\,h(v)\,d v = 
\,-2\pi i  \sum_j\; \mathit{R} \! \left(F \! \left(v_{j} \right)\right)_{v_{ j}\in \mathfrak{S}_{\;b}}\,.
\label{Master2}
\end{equation}
\end{cor}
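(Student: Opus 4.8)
The plan is to recycle the contour construction from the proof of the Theorem essentially verbatim, stopping short only at the very last simplification. The key observation is that the entire chain \eqref{Jtot}--\eqref{Jtot2} — closing the rectangular contour around the strip $\mathfrak{S}_{b}$, applying the change of variables $v\mapsto -i\,b-v$ to the lower edge $\Im(v)=-b$, and invoking the residue theorem to obtain the factor $-2\pi i$ — nowhere uses the antisymmetry hypothesis \eqref{Crit}. That hypothesis entered only at the final step, where it collapsed $F(v)-F(-i\,b-v)$ into $2F(v)$. Consequently the identity
\[
\int_{-\infty}^{\infty}\bigl(F(v)-F(-i\,b-v)\bigr)\,dv=-2\pi i\sum_j R\!\left(F(v_j)\right)_{v_j\in\mathfrak{S}_{b}}
\]
is valid under the standing hypotheses (meromorphy in $\mathfrak{S}_{b}$ and existence of the integral) independently of \eqref{Crit}, and I would take it as the starting point.

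First I would substitute the Corollary's hypothesis \eqref{Crit2} directly into the left-hand integrand, replacing $F(v)-F(-i\,b-v)$ by $h(v)\,F(v)$; this produces \eqref{Master2} at once. The remaining task is purely to confirm that the hypotheses license the substitution. I would verify three points: that $F$ remains meromorphic in $\mathfrak{S}_{b}$, so the residue sum and the contour interior are exactly those of the Theorem; that continuity of $h$ on the real axis renders $h(v)\,F(v)$ well defined there so that the stated integral is meaningful; and that the vertical segments of the rectangle at $\Re(v)=\pm\infty$ contribute nothing in the limit, which is precisely the decay assumption already encoded in the phrase ``the integral exists.'' None of these is an obstacle so much as a bookkeeping check inherited from the parent proof.

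The one genuinely subtle point — requiring care rather than effort — is the sign and prefactor. The contour runs along the real axis left-to-right and along $\Im(v)=-b$ right-to-left, hence encircles $\mathfrak{S}_{b}$ clockwise, which is exactly what delivers $-2\pi i$ in \eqref{Jtot2}. Because here no factor of two is absorbed by symmetry (we do \emph{not} fold the difference into $2F$), the Corollary correctly retains the full $-2\pi i$, in contrast to the $-\pi i$ of \eqref{Master}. I would flag this explicitly to preempt the natural worry that the two results disagree on their prefactors; the discrepancy is entirely accounted for by the presence or absence of the symmetry-induced doubling.
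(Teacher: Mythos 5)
Your proposal is correct and matches the paper's (implicit) argument exactly: the Corollary is intended to follow immediately from \eqref{Jtot2}, which as you observe never uses the antisymmetry \eqref{Crit}, so substituting \eqref{Crit2} for the integrand difference yields \eqref{Master2} with the full factor $-2\pi i$. Your remark on why the prefactor differs from the $-\pi i$ of \eqref{Master} is the right clarification and is consistent with the paper.
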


\begin{cor}
If $h(v)\in\mathfrak{C}$ is continuous and 
\begin{equation}
F \! \left(v \right)+\,F \! \left(-i\,b -v \right) = h(v)\,F \! \left(v \right)
\label{Crit2a}
\end{equation}
then

\begin{equation}
\int_{-\infty}^{\infty}F \! \left(v \right)(2-h(v))\,d v =\, 
-2\pi i  \sum_j\; \mathit{R} \! \left(F \! \left(v_{j} \right)\right)_{v_{ j}\in \mathfrak{S}_{\;b}}\,
\label{Master2a}
\end{equation}
\end{cor}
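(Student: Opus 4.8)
The plan is to reuse the intermediate identity \eqref{Jtot2} established in the proof of the Theorem. That identity was obtained purely from the contour construction \eqref{Jtot} together with the change of variables $v \mapsto -i\,b - v$ on the shifted line, and it therefore holds for \emph{any} $F$ meromorphic in $\mathfrak{S}_{b}$ for which the relevant integral exists, independently of whether the antisymmetry \eqref{Crit} is imposed. Stripped of the final step, \eqref{Jtot2} reads
\begin{equation*}
\int_{-\infty}^{\infty}\bigl(F(v) - F(-i\,b - v)\bigr)\,dv = -2\pi i \sum_j \mathit{R}\bigl(F(v_j)\bigr)_{v_j \in \mathfrak{S}_{b}}.
\end{equation*}
The whole task then reduces to rewriting the integrand $F(v) - F(-i\,b - v)$ by means of the hypothesis \eqref{Crit2a}.

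First I would solve \eqref{Crit2a} for the shifted term, obtaining $F(-i\,b - v) = \bigl(h(v) - 1\bigr)F(v)$. Substituting this into the integrand gives
\begin{equation*}
F(v) - F(-i\,b - v) = F(v) - \bigl(h(v)-1\bigr)F(v) = \bigl(2 - h(v)\bigr)F(v),
\end{equation*}
and inserting this back into the displayed identity yields \eqref{Master2a} at once. The argument is thus algebraic and short, parallel to the first Corollary; the only difference is that there the combination $F(v)-F(-i\,b-v)$ is read off immediately from \eqref{Crit2}, whereas here it must be assembled from the ``$+$'' relation \eqref{Crit2a}.

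The point requiring care is not the manipulation but the internal coherence of the hypothesis. Applying the substitution $v \mapsto -i\,b-v$ to \eqref{Crit2a} itself and comparing with the original relation forces $h(v)F(v) = h(-i\,b-v)F(-i\,b-v)$, that is, $h(-i\,b-v) = h(v)/(h(v)-1)$ wherever $h(v)\neq 1$. I would flag this compatibility constraint explicitly, since it shows that $h$ cannot be prescribed arbitrarily once $F$ is fixed; it is, however, automatically satisfied by any pair $(F,h)$ actually obeying \eqref{Crit2a}, and so plays no role in the formal chain of equalities above. The remaining analytic hypothesis — that the combined integral on the left converges even though $\int F(v)\,dv$ and the integral along the shifted line may individually diverge — is inherited verbatim from the Theorem, and the vanishing of the connecting arcs at $\pm\infty$ is subsumed in the standing assumption that ``the integral exists,'' so no fresh convergence analysis is needed.
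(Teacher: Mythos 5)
Your proof is correct and follows exactly the route the paper intends: the identity \eqref{Jtot2} from the Theorem's proof holds independently of the antisymmetry condition \eqref{Crit}, and substituting $F(-i\,b-v)=(h(v)-1)F(v)$ from \eqref{Crit2a} turns its integrand into $(2-h(v))F(v)$, giving \eqref{Master2a}. The compatibility observation about $h$ is a sensible extra check but not needed for the argument.
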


\begin{rem} 
In contrast to \eqref{Crit}, the function $h(v)$ introduced in either of \eqref{Crit2} or \eqref{Crit2a} does not impose the existence of a condition, but merely states the obvious -- there always exists $h(v)$ such that 
\begin{equation}
h(v)=1\pm F(-ib-v)/F(v)
\end{equation}
and $h(v)$ becomes of interest if $h(v)\neq 1$ or $h(v)\neq 2$. If $h(v)=0$, then \eqref{Master2a} reduces to \eqref{Master}. See Section \ref{sec:Mult}.
\end{rem}

\begin{rem}
Among other possibilities, to satisfy \eqref{Crit} choose a function $g(v)$ such that

\begin{equation}
F \! \left(v \right) = 
g \! \left(v \right)-g \! \left(-i\,b -v \right)\,;
\label{FgenA}
\end{equation}

to satisfy \eqref{Crit2} choose a function $g(v)$ such that

\begin{equation}
F \! \left(v \right) = 
\frac{g \! \left(a-i\,v \right)\,g \! \left(a-b+i\,v \right)}{\cosh(\pi\,v)}\,,\hspace{10pt} |b|\neq 1\,,
\label{FgenM}
\end{equation}
and to satisfy \eqref{Crit2a} choose a function $g(v)$ such that

\begin{equation}
F \! \left(v \right) = 
g \! \left(a-i\,v \right)\,g \! \left(a+b+i\,v\right)\,.
\label{FgenM2}
\end{equation}
Many other possibilities were discussed in \cite{Master}.
\end{rem}

Here, we consider the application of the generalized Master equation\eqref{Master} to evaluate novel forms of certain integrals over an infinite domain with the emphasis on those whose integrand usually contains Riemann's zeta function as a prototype, with the tacit understanding that the use of other functions follows by example, provided that allowance is made for the different analytic structure of alternate choices.  In particular, see Section \ref{sec:remark1} where a novel technique for the analytic evaluation of an integral is demonstrated and Section \ref{sec:Mult} where variations involving the somewhat arbitrary function $h(v)$ are explored. \newline

Regarding notation, $\gamma$ represents the Euler-Mascheroni constant, $\gamma\,(j)$ are the $j^{th}$ Stieltjes constants found in the expansion of Riemann's zeta function $\zeta(s)$ about $s=1$, $m,n \in \mathbb{Z}$ or $\mathbb{N}$ as specified, subscripts $R$ and $I$ respectively refer to the real and imaginary parts of whatever they are attached to, at times enclosed by $\left[..\right]$ for clarity, $\zeta^{(n)}(s)$ refers to the $n^{th}$ derivative of $\zeta(s)$ and $\delta(v)$ is the Dirac delta function. $\xi(s)$ is Riemann's augmented zeta function defined in \eqref{Xi} below and $\mathcal{E}_{j}$ are Euler numbers. The variables $a,~b$ and $s$ are all real, except where noted. At various times, both computer codes Maple \cite{Maple23} and Mathematica \cite{Math23} were employed in the calculations, each being mentioned where it was deemed necessary. The use of $:=$ means symbolic replacement and $\lceil x\rceil$ refers to the ceiling function (least integer greater than x). A (baker's) dozen-and-a-half of the more interesting results follow:

\begin{itemize}

\item{From Section \ref{sec:specific}, equation \eqref{IntG3}:}
\begin{equation} \nonumber
\int_{-\infty}^{\infty}\frac{\zeta \! \left(\frac{3}{2}-i\,v \right) \zeta \! \left(\frac{1}{2}+i\,v \right)}{\cosh^{3} \! \left(\pi \,v \right)}d v
 = 
-\frac{17\,\pi^{2}}{120}+\frac{\gamma^{2}}{2}+\gamma \! \left(1\right)+\frac{{\gamma \left(3\right)}/{3}+\gamma \,\gamma \! \left(2\right)-\gamma \! \left(1\right)^{2}}{\pi^{2}}\,;
\end{equation}

\item{From Section \ref{sec:remark1}, equation \eqref{IntG5b}:}
\begin{equation} \nonumber
\int_{0}^{\infty}\frac{\zeta_{R} \! \left(\frac{3}{2}+i\,v \right)}{\cosh \! \left(\pi \,v \right)}d v
 = 1;
\end{equation}

\item{From Section \ref{sec:SpeCases1}, equation \eqref{K1d}}
\begin{equation} \nonumber
\int_{-\infty}^{\infty}\frac{\xi \! \left(\frac{3}{2}-i\,v \right)}{\left(\frac{3}{2}-i\,v \right) \Gamma \! \left(\frac{3}{4}+\frac{i\,v}{2}\right) \cosh \! \left(\pi \,v \right)}d v
 = \frac{2\,\gamma-1}{\sqrt{\pi}}\,;
\end{equation}

\item{From Section \ref{sec:SpeCases1a}, equation \eqref{Ex6ApB}:}
\begin{equation} \nonumber
\int_{-\infty}^{\infty}\frac{\cosh \! \left(\zeta \! \left(-\frac{b}{2}+i\,v \right)\right)+\cosh \! \left(\zeta \! \left(\frac{b}{2}-i\,v \right)\right)}{\cosh \! \left(\frac{\pi \,v}{b}\right)}d v
 = 2\,\left|b\right|\,\cosh \! \left({1}/{2}\right);
\end{equation}

\item{From Section \ref{sec:MoreGen}, equation \eqref{DiffSq}:}
\begin{align} \nonumber
\int_{-\infty}^{\infty}\frac{\left(\zeta \! \left(a+i\,v  \right)-\zeta \! \left( a +b-i\,v \right)\right)^{2}}{\cosh \! \left(\frac{\pi \,v}{b}\right)}d v
 = 
-\frac{4\,\gamma \,\pi}{\cos \! \left(\frac{\pi  \left(a -1\right)}{b}\right)}+\frac{2\,\pi^{2}}{b\,}\,\frac{\sin \! \left(\frac{\pi  \left(a -1\right)}{b}\right)}{\cos \! \left(\frac{\pi  \left(a -1\right)}{b}\right)^{2}}+\frac{4\, \pi\zeta \! \left(2\,a -1+b \right)}{\cos \! \left(\frac{\pi  \left(a -1\right)}{b}\right)}\,;
\end{align}

\item{From Section \ref{sec:SpeCases2}, equation \eqref{ScMin}:}

\begin{equation} \nonumber
\int_{-\infty}^{\infty}\frac{\left(\zeta \! \left(1-\frac{b}{2}+i\,v \right)-\zeta \! \left(1+\frac{b}{2}-i\,v \right)\right)^{2}}{\cosh \! \left(\frac{\pi \,v}{b}\right)}d v
 = -8\,b\,\gamma \! \left(1\right)+\frac{2\,\pi^{2}}{3\,b}\,;
\end{equation}

\item{From Section \ref{sec:SpCases3}, equation \eqref{J1ab}:}

\begin{equation} \nonumber
\int_{0}^{\infty}\frac{ \zeta_{R} \! \left(1+i\,v \right)-\zeta_{R} \! \left(i\,v \right)}{\cosh \! \left(2\,\pi \,v \right)}d v
 = 
\left({\zeta \! \left({3}/{4}\right)}-{\zeta \! \left({1}/{4}\right)+{\pi}}\right)/{2}\,;
\end{equation}

\item{From Section \ref{sec:CmplxA}, equation \eqref{Zid}:}

\begin{equation} \nonumber
\zeta \! \left({1}/{2}+i\,t \right) \approx 
\frac{1}{2}\overset{N}{\underset{j =0}{\sum}}\; \frac{\mathcal{E}_{\,2j} \left(\zeta^{\left(2\,j \right)}\left(i\,t \right)+\zeta^{\left(2\,j \right)}\left(1+i\,t \right)\right)}{2^{2\,j}\,\Gamma \left(2\,j +1\right)}-\frac{2}{\pi \,\cosh \! \left(\pi \,t \right)}\,;
\end{equation}

\item{From Section \ref{sec:VarCrit1}, equation \eqref{Ct4bm1}:}

\begin{equation} \nonumber
\int_{-\infty}^{\infty}\left[\left(2\,\pi \right)^{i\,v}\,\zeta \! \left(-i\,v \right)^{2}\, \Gamma \! \left(-i\,v \right)\right]_{R}\;\frac{\cosh \! \left(\frac{\pi \,v}{2}\right)}{\cosh \! \left(\pi \,v \right)}\,d v
 = \frac{\zeta \! \left(\frac{1}{2}\right)^{2}}{2}-\frac{\pi}{4}\,;
\end{equation}

\item{From Section \ref{sec:App}, equation \eqref{CritLim}:}

\begin{equation} \nonumber
\underset{b \rightarrow \frac{1}{\sqrt{2}}}{\mathrm{lim}}\! \cos \! \left(\pi \,b^{2}\right)\int_{-\infty}^{\infty}\frac{ \left[\zeta \! \left(\frac{1}{2}-\frac{b}{2}+i\,v \right) \zeta \! \left(\frac{1}{2}+\frac{b}{2}-i\,v \right)\right]_{R}\, \cosh \! \left(\pi \,b\,v \right)}{\cos^{2}\left(\pi \,b^{2}\right)+\sinh^{2}\left(\pi \,b\,v \right)}d v=0\,;
\end{equation}

\item{From Section \ref{sec:VarCrit2}, equation \eqref{Ct2d}:}

\begin{equation} \nonumber
\int_{-\infty}^{\infty}\frac{\zeta \! \left(\frac{1}{2}-i\,v \right) \zeta \! \left(-\frac{1}{2}+i\,v \right)}{\cosh \! \left(\pi \,v \right)}d v
 = {\frac{1}{4}}\,;
\end{equation}

\item{From Section \ref{sec:Rm1}, \eqref{FintG3}:}

\begin{equation} \nonumber
\int_{0}^{\infty}\Re \! \left(\frac{v +2\,i}{\zeta \! \left(-\frac{9}{2}+i\,v \right) \left(i\,v -4\right) \zeta \! \left(-\frac{1}{2}-i\,v \right) v}\right)d v
 = 
\frac{96\,\pi^{5}}{7\,\zeta \! \left(5\right)}-\frac{64\,\pi^{3}}{\zeta \! \left(3\right)}-\frac{\pi}{4\,\zeta \! \left(-\frac{1}{2}\right) \zeta \! \left(-\frac{9}{2}\right)}\,;
\end{equation}

\item{From Section \ref{sec:NotSoC}, equation \eqref{Jh}:}

\begin{equation} \nonumber
\int_{-\infty}^{\infty}\frac{\zeta \! \left(\frac{1}{2}+i\,v \right)}{\frac{1}{2}+i\,v}d v
 = -\pi\,;
\end{equation}

\item{From Section \ref{sec:HurZ}, equation \eqref{J8b}:}
\begin{equation} \nonumber
\int_{-\infty}^{\infty}\frac{\zeta \! \left(\frac{1}{2}+i\,v \right) 2^{i\,v}}{\frac{1}{2}+i\,v}d v
 = -\frac{\pi \,\sqrt{2}}{2}\,;
\end{equation}

\item{From Section \ref{sec:Mult}, equation \eqref{FintBx}:}
\begin{equation} \nonumber
\int_{0}^{\infty}\left(-\frac{\sqrt{\sqrt{a^{2}+v^{2}}-a}}{v}+\frac{v\,\sqrt{\sqrt{v^{2}+\left(a +b \right)^{2}}-a -b}}{b^{2}+v^{2}}+\frac{b\,\sqrt{\sqrt{v^{2}+\left(a +b \right)^{2}}+a +b}}{b^{2}+v^{2}}\right)d v
 = \pi \,{\sqrt{a/2}}\,;
\end{equation}

\item{From Section \ref{sec:Cos}, equation \eqref{CR2b}:}

\begin{equation} \nonumber
\int_{-1}^{1}\left(v^{2}+1\right)^{-1+\frac{s}{2}} \left(1-v^{2}\right)^{-\frac{s}{2}}\,\cos \! \left(s\,\arctan \! \left(v \right)\right)d v
 = 2^{-1+\frac{s}{2}}\,\pi,\hspace{20pt} s<2\,;
\end{equation}

\item{From Section \ref{sec:Sine}, equation \eqref{Test2G}:}

\begin{equation} \nonumber
\int_{-\infty}^{\infty}\frac{\cosh \! \left(2\,\pi \,v \right) \cos \! \left(\pi \,b \right)-1}{\left(\cosh \! \left(2\,\pi \,v \right)-\cos \! \left(\pi \,b \right)\right)^{2}\,\cosh \! \left(\frac{\pi \,v}{b}\right)}\,d v
 = 
-\frac{1+2\,b^{2}}{12\,{| b |}}+\frac{1}{{| b |}}\overset{{\lceil \frac{{| b |}}{2}\rceil}-1}{\underset{j =1}{\sum}}\; \cot \! \left(\frac{j\,\pi}{b}\right) \csc \! \left(\frac{j\,\pi}{b}\right)\,.
\end{equation}

\end{itemize}

\section{Simple examples with a common kernel \texorpdfstring{$1/\cosh(\pi v/b)$}{Lg}} \label{sec:SimpEx}

\subsection{Specific \texorpdfstring{($b=1$)}{Lg}} \label{sec:specific}
Consider some examples, all of which will be focussed on Riemann's Zeta function $\zeta(s)$. Let

\begin{equation}
F(v)=\frac{\zeta \! \left(\frac{3}{2}-i\,v \right)+\zeta \! \left(\frac{1}{2}+i\,v \right)}{\cosh \! \left(\pi \,v \right)}
\label{F1v}
\end{equation}
Then it is easily seen that $F(v)=-F(-v-i)$ and therefore
\begin{equation}
\int_{-\infty}^{\infty}\frac{\zeta \! \left(\frac{3}{2}-i\,v \right)+\zeta \! \left(\frac{1}{2}+i\,v \right)}{\cosh \! \left(\pi \,v \right)}d v
 = 2\,\gamma
\label{Intf1}
\end{equation}
because the only singularity of the integrand in $\mathfrak{S}_{1}$ occurs at $v=-i/2$ with residue $2\,i\gamma /\pi$. Similarly

\begin{equation}
\int_{-{\infty}}^{{\infty}}\frac{\zeta \! \left(\frac{3}{2}-i\,v \right) \zeta \! \left(\frac{1}{2}+i\,v \right)}{\cosh \! \left(\pi \,v \right)}d v
 = 2\,\gamma \! \left(1\right)+\gamma^{2}-\frac{\pi^{2}}{6}\,,
\label{Intf2}
\end{equation}
and
\begin{equation}
\int_{-\infty}^{\infty}\frac{\zeta \! \left(\frac{3}{2}-i\,v \right)^{2}+\zeta \! \left(\frac{1}{2}+i\,v \right)^{2}}{\cosh \! \left(\pi \,v \right)}d v
 = -4\,\gamma \! \left(1\right)+2\,\gamma^{2}+\frac{\pi^{2}}{3}\,.
\label{Intf3}
\end{equation}
Using the same principles we also have

\begin{equation}
\int_{-\infty}^{\infty}\left(\Gamma \! \left({3}/{2}-i\,v \right) \zeta \! \left({1}/{2}+i\,v \right)-\Gamma \! \left({1}/{2}+i\,v \right) \zeta \! \left({3}/{2}-i\,v \right)\right)d v
 = -2\,\pi\,,
\label{Thing1}
\end{equation}

\begin{equation}
\int_{-\infty}^{\infty}\frac{\zeta \! \left(\frac{3}{2}-i\,v \right)-\zeta \! \left(\frac{1}{2}+i\,v \right)}{\cosh ^{2} \left(\pi \,v \right)}d v
 = -\frac{2\,\gamma \! \left(1\right)}{\pi}+\frac{2\,\pi}{3}\,,
\label{IntG1}
\end{equation}

\begin{equation}
\int_{-\infty}^{\infty}\frac{\zeta \! \left(\frac{3}{2}-i\,v \right)^{2}-\zeta \! \left(\frac{1}{2}+i\,v \right)^{2}}{\cosh^{2} \! \left(\pi \,v \right)}d v
 = 
\frac{4\,\gamma \,\pi}{3}-\frac{4\,\gamma \,\gamma \! \left(1\right)-2\,\gamma \! \left(2\right)}{\pi}\,,
\label{IntG2}
\end{equation}
and
\begin{equation}
\int_{-\infty}^{\infty}\frac{\zeta \! \left(\frac{3}{2}-i\,v \right) \zeta \! \left(\frac{1}{2}+i\,v \right)}{\cosh^{3} \! \left(\pi \,v \right)}d v
 = 
-\frac{17\,\pi^{2}}{120}+\frac{\gamma^{2}}{2}+\gamma \! \left(1\right)+\frac{{\gamma \left(3\right)}/{3}+\gamma \,\gamma \! \left(2\right)-\gamma \! \left(1\right)^{2}}{\pi^{2}}\,.
\label{IntG3}
\end{equation}

All of the above can be verified numerically and numerous variations are obvious.
\subsection{Application} \label{sec:remark1}

Elsewhere \cite[Eq. (5.8)]{Mdet}, based on an inverse Mellin transform, it has been shown that 
\begin{equation}
\int_{0}^{\infty}\frac{\zeta_{R} \! \left(\frac{1}{2}+i\,v \right)}{\cosh \! \left(\pi \,v \right)}d v
 =\gamma-1,
 \label{fromDet}
\end{equation} 
so rewriting \eqref{Intf1} as
\begin{equation}
\int_{0}^{\infty}\frac{\zeta_{R} \! \left(\frac{1}{2}+i\,v \right)+\zeta_{R} \! \left(\frac{3}{2}+i\,v \right)}{\cosh \! \left(\pi \,v \right)}d v
 = \gamma
\label{IntG5}
\end{equation}
leads to

\begin{equation}
\int_{0}^{\infty}\frac{\zeta_{R} \! \left(\frac{3}{2}+i\,v \right)}{\cosh \! \left(\pi \,v \right)}d v
 = 1
\label{IntG5b}
\end{equation}
by subtracting the two. 

\begin{rem} \label{Remx} The identity \eqref{IntG5b} can also be independently verified by writing $\zeta_{R} \left(\frac{3}{2}+i\,v \right)$ as a Dirichlet series and interchanging the sum and integration, because both are convergent, yielding

\begin{equation}
\int_{0}^{\infty}\frac{\zeta_{R} \! \left(\frac{3}{2}+i\,v \right)}{\cosh \! \left(\pi \,v \right)}d v
 = 
\overset{\infty}{\underset{j =1}{\sum}}\; {j^{-\frac{3}{2}}}\int_{0}^{\infty}\frac{\cos \left(v\,\ln \left(j \right)\right)}{\cosh \left(\pi \,v \right)}d v\,.
\label{IntG5c1}
\end{equation}

The resulting integral
\begin{equation}
\int_{0}^{\infty}\frac{\cos \! \left(v\,\ln \! \left(j \right)\right)}{\cosh \! \left(\pi \,v \right)}d v
 = \frac{\sqrt{j}}{j +1}
\label{GRx}
\end{equation}
is listed \cite[Eq. 3.981(3)]{G&R} and the remaining sum
\begin{equation}
\overset{\infty}{\underset{j =1}{\sum}}\! \frac{1}{j \left(j +1\right)}
 = 1
\label{IntG5C}
\end{equation}
is well-known. Although the Dirichlet sum corresponding to \eqref{fromDet} does not converge, by reversing the above logic, it is possible to derive \eqref{fromDet} independently, thus demonstrating a novel means of discovering integral identities involving the function $\zeta(s)$ living inside the critical strip $(0<\Re(s)<1)$, by pairing it via \eqref{Crit}, with $\zeta(s_{1})$ when $s_{1}$ resides outside the critical strip, where many convergent summation and integral identities are known.
\end{rem}

\subsection{General} \label{sec:GenX}


We begin with a more general example that satisfies \eqref{InvDef} provided that $p$ is an odd integer:
\begin{equation}
F(v)\equiv\,\left(\frac{f\! \left( a +i\,b\,v\right)^s+f \! \left(a +b-i\,b\,v  \right)^s}{\cosh \! \left(\pi \,v \right)}\right)^{p}\,,
\label{Fgeneral}
\end{equation}
so that, after a simple scaling of integration variables with $p=1$, choosing $f(a+ib)\Rightarrow\zeta(a+ib)$ with $s=n$, we consider the integral
\begin{equation}
\frac{1}{b}\int_{-\infty}^{\infty}\frac{\zeta \left(a+i\,v  \right)^n+\zeta \left(a +b-i\,v  \right)^n}{\cosh \left(\frac{\pi \,v}{b}\right)}d v\,,
\label{IFg1a}
\end{equation}
where $a\in\mathfrak{R}$ and $b\in\mathfrak{R}$. Omitting for now the possibility that the extension $a\in\mathfrak{C}$ and $b\in\mathfrak{C}$ is formally possible (see Section \ref{sec:CmplxA}), it is easily seen that there are three possible residue terms living inside the region $\mathfrak{S}_{b}$, those being delineated by the conditions
\begin{equation}
0<a<1~ {\mathrm{and}}~ a+b>1,
 \label{Condits}
\end{equation}
indexed according to the value of $n$. See Appendix \ref{sec:AppA}. Thus \eqref{Master} becomes
\begin{align} 
\int_{-\infty}^{\infty}&\frac{\zeta \! \left(i\,v +a \right)^n+ \zeta \! \left(-i\,v +a +b \right)^n}{\cosh \! \left(\frac{\pi \,v}{b}\right)}d v 
= 
-i\,\pi\, b/|b| \left(R_{n,1} +\left(R_{n,2} +R_{n,3} \right) \mathrm{H}\! \left(a,b\right) \right)  \,,
\label{Ex6}
\end{align}
where $H(a,b)=1$ if inequalities \eqref{Condits} are true, and $H(a,b)=0$ otherwise, except in the case of equality, when $H(a,b)=1/2$. If $b<0$, the direction of the inequalities \eqref{Condits} reverses and the direction of the contour in \eqref{Jtot} reverses, requiring the use of $\left|b\right|$ on the right-hand side.\newline

\begin{rem} Formally, both $a$ and $b$ can be complex; although this makes it difficult to visualize the meaning of $\mathfrak{S}_{b}$, it simply shifts various pole locations sideways in the complex $v$-plane -- see Sections \ref{sec:CmplxA} and \ref{sec:Rm1}.
\end{rem}

\subsubsection{Special cases: \texorpdfstring{$a=-b/2$, $n=1$, $b>0$}{Lg}}\label{sec:SpeCases1}

In \eqref{Ex6} let $a=-b/2$, replace $b:=2b$ and evaluate the limit of the right-hand side to obtain
\begin{equation}
\int_{0}^{\infty}\frac{ \zeta_{R} \! \left( -b+i\,v \right)+\zeta_{R} \! \left( b-i\,v \right)}{\cosh \! \left(\frac{\pi \,v}{2\,b}\right)}d v
 = -b\,\hspace{20pt} 0<b<1\,.
\label{YLR1}
\end{equation}
Let $b=1/2$ and, after taking \eqref{fromDet} into account, we find
\begin{equation}
\int_{0}^{\infty}\frac{\zeta_{R} \! \left(-\frac{1}{2}+i\,v \right)}{\cosh \! \left(\pi \,v \right)}d v
 = \frac{1}{2}-\gamma\,.
\label{YLR2}
\end{equation}
After applying the functional equation for $\zeta(s)$, \eqref{YLR2} yields the alternative form
\begin{equation}
\Re \;\int_{0}^{\infty}\frac{ {\pi^{i\,v -1}\,\Gamma \left(\frac{3}{4}-\frac{i\,v}{2}\right) \zeta \left(\frac{3}{2}-i\,v \right)}{}}{\cosh \! \left(\pi \,v \right)\Gamma \left(-\frac{1}{4}+\frac{i\,v}{2}\right)}\,d v
 = \frac{1}{2}-\gamma\,,
\label{K1}
\end{equation}
which, in terms of Riemann's functions 
\begin{equation}
\Upsilon \! \left(s \right) \equiv 
\zeta \! \left(s \right) \Gamma \! \left(\frac{s}{2}\right) \pi^{-\frac{s}{2}}
\label{Ups}
\end{equation}
and 
\begin{equation}
\xi \! \left(s \right) \equiv{\,s \left(s -1\right) \Upsilon \! \left(s \right)}/{2}\,,
\label{Xi}
\end{equation}
can alternatively be written

\begin{equation}
\int_{0}^{\infty}\Re \! \left(\frac{\pi^{{i\,v}/{2}}\,\Upsilon \! \left(\frac{3}{2}-i\,v \right)}{\Gamma \! \left(-\frac{1}{4}+\frac{i\,v}{2}\right) \cosh \! \left(\pi \,v \right)}\right)d v
 = {\pi^{{1}/{4}} \left(1/2-\gamma \right)}{}
\label{K1b}
\end{equation}
or

\begin{equation}
\int_{0}^{\infty}{\Re \! \left(\frac{\pi^{{i\,v}/{2}}\,\xi \left(\frac{3}{2}-i\,v \right)}{\left(\frac{3}{2}-i\,v \right) \Gamma \left(\frac{3}{4}+\frac{i\,v}{2}\right)\cosh \! \left(\pi \,v \right)}\right)}{}d v
 = \pi^{{1}/{4}} \left(\gamma-{1}/{2} \right)
\label{K1d}
\end{equation}


\subsubsection{Special case: \texorpdfstring{$a=-b/2$,~ $n>1$: -- additive}{Lg}} {\label{sec:SpeCases1a}}


We now consider the case $n>1$, where the first few residues are listed in Appendix \ref{sec:AppA}. If the inequality \eqref{Condits} is false so that $R_{\,n,2}$ and $R_{\,n,3}$ do not contribute, we have

\begin{equation}
\int_{-\infty}^{\infty}\frac{\zeta \! \left(a+i\,v  \right)^{n}+\zeta \! \left(a +b-i\,v  \right)^{n}}{\cosh \! \left(\frac{\pi \,v}{b}\right)}d v
 = 2\,\left|b\right|\,\zeta \! \left(a +\frac{b}{2}\right)^{n}\,.
\label{Ex6n}
\end{equation}
In the case $a=-b/2$, for $~0<\left|b\right|<2$, 

\begin{equation}
\int_{-\infty}^{\infty}\frac{\zeta \! \left(-\frac{b}{2}+i\,v \right)^{n}+\zeta \! \left(\frac{b}{2}-i\,v \right)^{n}}{\cosh \! \left(\frac{\pi \,v}{b}\right)}d v
 =\, 2\,\left|b\right| \left(-\frac{1}{2}\right)^{n}\,.
\label{Ex6b}
\end{equation}
This leads to a number of curious variations by summing both sides with creatively chosen coefficients, leading, for example, to
\begin{equation}
\int_{-\infty}^{\infty}\frac{{\mathrm e}^{\zeta \left(-\frac{b}{2}+i\,v \right)}+{\mathrm e}^{\zeta \left(\frac{b}{2}-i\,v \right)}}{\cosh \! \left(\frac{\pi \,v}{b}\right)}d v
 =\, 2\,\left|b\right|\,{\mathrm e}^{-{1}/{2}}
\label{Ex6Exp}
\end{equation}
and
\begin{equation}
\int_{-\infty}^{\infty}\frac{{\mathrm e}^{-\zeta \left(-\frac{b}{2}+i\,v \right)}+{\mathrm e}^{-\zeta \left(\frac{b}{2}-i\,v \right)}}{\cosh \! \left(\frac{\pi \,v}{b}\right)}d v
 =\,  2\,\left|b\right|\,{\mathrm e}^{{1}/{2}}\,.
\label{Ex6cB}
\end{equation}

Adding and subtracting \eqref{Ex6Exp} and \eqref{Ex6cB} produces the curious identities

\begin{equation}
\int_{-\infty}^{\infty}\frac{\cosh \! \left(\zeta \! \left(-\frac{b}{2}+i\,v \right)\right)+\cosh \! \left(\zeta \! \left(\frac{b}{2}-i\,v \right)\right)}{\cosh \! \left(\frac{\pi \,v}{b}\right)}d v
 = 2\,\left|b\right|\,\cosh \! \left({1}/{2}\right)
\label{Ex6ApB}
\end{equation}
and

\begin{equation}
\int_{-\infty}^{\infty}\frac{\sinh \! \left(\zeta \! \left(-\frac{b}{2}+i\,v \right)\right)+\sinh \! \left(\zeta \! \left(\frac{b}{2}-i\,v \right)\right)}{\cosh \! \left(\frac{\pi \,v}{b}\right)}d v
 = -2\,\left|b\right|\,\sinh \! \left(\frac{1}{2}\right)\,.
\label{Ex6AmB}
\end{equation}

In the case that $a=-b/2-2m$ with $m>0$, the right-hand side of \eqref{Ex6} vanishes, so, for example, with $b=-2,~n=1$ we find

\begin{equation}
\int_{-\infty}^{\infty}\frac{\zeta \! \left( -1+i\,v\right)+\zeta \! \left(-3\,-i\,v \right)}{\cosh \! \left(\frac{\pi \,v}{2}\right)}d v
 = 0\,.
\label{Scn1}
\end{equation}

\begin{rem} Each of the above obeys the master condition \eqref{Crit} in its own right, and therefore each is valid for any function $\zeta(s):=f(s)$ if the right-hand side and corresponding inequality are replaced by their bespoke equivalents.
\end{rem}

\subsubsection{\texorpdfstring{$n>1$ -- More General}{Lg}}\label{sec:MoreGen}


Similarly, as in Section \ref{sec:SpeCases1a}, we find, if $0<a<1$ and $a+b>1$, then

\begin{equation}
\int_{-\infty}^{\infty}\frac{\zeta \! \left(a+i\,v  \right) \zeta \! \left(a+b-i\,v  \right)}{\cosh \! \left(\frac{\pi \,v}{b}\right)}d v
 = 
\,b\,\zeta \! \left(a +\frac{b}{2}\right)^{2} -\frac{2\,\pi\,\zeta \! \left(2\,a -1+b \right) }{\cos \! \left(\frac{\pi  \left(a -1\right)}{b}\right)}
\label{IFg1d}
\end{equation}
and under the same limitations, with $n=2$, \eqref{Ex6b} becomes
\begin{equation}
\int_{-\infty}^{\infty}\frac{\zeta \! \left(a+i\,v  \right)^{2}+\zeta \! \left(a+b-i\,v \right)^{2}}{\cosh \! \left(\frac{\pi \,v}{b}\right)}d v
 = 
2\,b\,\zeta \! \left(a +\frac{b}{2}\right)^{2} -\frac{4\,\gamma \,\pi}{\cos \! \left(\frac{\pi  \left(a -1\right)}{b}\right)}+\frac{2\,\pi^{2}\,\sin \! \left(\frac{\pi  \left(a -1\right)}{b}\right) }{b\,\cos \! \left(\frac{\pi  \left(a -1\right)}{b}\right)^{2}}\,.
\label{IFg1e}
\end{equation}

Adding and subtracting gives us

\begin{align} \nonumber
\int_{-\infty}^{\infty}&\frac{ \left(\zeta \! \left(a+i\,v  \right)+\zeta \! \left(a+b-i\,v  \right)\right)^{2}}{\cosh \left(\frac{\pi \,v}{b}\right)}\,d v \\&
=4\,b\,\zeta \! \left(a +\frac{b}{2}\right)^{2} -\frac{4\,\gamma \,\pi}{\cos \! \left(\frac{\pi  \left(a -1\right)}{b}\right)} 
 +\frac{2\,\pi^{2}}{b}\frac{\,\sin \! \left(\frac{\pi  \left(a -1\right)}{b}\right)}{\cos \! \left(\frac{\pi  \left(a -1\right)}{b}\right)^{2}}-\frac{4\,\pi\zeta \! \left(2\,a -1+b \right) }{\cos \! \left(\frac{\pi  \left(a -1\right)}{b}\right)}
\label{BothSq}
\end{align}
and

\begin{align} 
\int_{-\infty}^{\infty}\frac{\left(\zeta \! \left(a+i\,v  \right)-\zeta \! \left( a +b-i\,v \right)\right)^{2}}{\cosh \! \left(\frac{\pi \,v}{b}\right)}d v
 = 
-\frac{4\,\gamma \,\pi}{\cos \! \left(\frac{\pi  \left(a -1\right)}{b}\right)}+\frac{2\,\pi^{2}}{b\,}\,\frac{\sin \! \left(\frac{\pi  \left(a -1\right)}{b}\right)}{\cos \! \left(\frac{\pi  \left(a -1\right)}{b}\right)^{2}}+\frac{4\, \pi\zeta \! \left(2\,a -1+b \right)}{\cos \! \left(\frac{\pi  \left(a -1\right)}{b}\right)}\,.
\label{DiffSq}
\end{align}


\subsubsection{Special cases: \texorpdfstring{$a=b/2,~~n=1$}{Lg}}\label{sec:SpeCases2}

In \eqref{Ex6}, let $a=b/2$ and if $2/3<b<2$, we have

\begin{equation}
\int_{0}^{\infty}\frac{\zeta_{R} \! \left({b}/{2}+i\,v \right)+\zeta_{R} \! \left({3\,b}/{2}-i\,v \right)}{\cosh \! \left({\pi \,v}/{b}\right)}d v
 = 
\,-X\frac{\pi}{\sin \! \left({\pi}/{b}\right)}+b\,\zeta \! \left(b \right) \,.
\label{YLR2x}
\end{equation}
Here, $X=1$ except if $b$ lies outside the above limits, in which case $X=0$; however in the case of equality $X=1/2$. If $b=1$, \eqref{YLR2x} reduces to \eqref{IntG5} by evaluating the limit of the right-hand side; if $b=1/2$ then
\begin{equation}
\int_{0}^{\infty}\frac{\zeta \! \left(\frac{1}{4}+i\,v \right)_{R}+\zeta \! \left(\frac{3}{4}-i\,v \right)_{R}}{\cosh \! \left(2\,\pi \,v \right)}\,d v
 = {\zeta \! \left({1}/{2}\right)}/{2}\,.
\label{YLR2b}
\end{equation}

\subsubsection{Special cases: \texorpdfstring{$a=1-b/2$}{Lg}} \label{sec:SpCases2}

In \eqref{BothSq} and \eqref{DiffSq} we take the limit $a\rightarrow 1-b/2$ to find respectively

\begin{equation}
\int_{-\infty}^{\infty}\frac{\left(\zeta \! \left(1-\frac{b}{2}+i\,v \right)+\zeta \! \left(1+\frac{b}{2}-i\,v \right)\right)^{2}}{\cosh \! \left(\frac{\pi \,v}{b}\right)}d v
 = 4\,\gamma^{2}\,b
\label{ScPlus}
\end{equation}
and
\begin{equation}
\int_{-\infty}^{\infty}\frac{\left(\zeta \! \left(1-\frac{b}{2}+i\,v \right)-\zeta \! \left(1+\frac{b}{2}-i\,v \right)\right)^{2}}{\cosh \! \left(\frac{\pi \,v}{b}\right)}d v
 = -8\,b\,\gamma \! \left(1\right)+\frac{2\,\pi^{2}}{3\,b}\,.
\label{ScMin}
\end{equation}

\subsubsection{Special cases: \texorpdfstring{$b=1/2,~n=1$}{Lg}}\label{sec:SpCases3}


Following the same principles as enunciated above with reference to \eqref{Ex6n}, if $b=1/2$, then for particular values of $a$, we have:
\begin{itemize}
\item{if $a=1/2$,}
\begin{equation}
\int_{0}^{\infty}\frac{\zeta_{R} \! \left(\frac{1}{2}+i\,v \right)+\zeta_{R} \! \left(1-i\,v \right)}{\cosh \! \left(2\,\pi \,v \right)}d v
 = ({\zeta \! \left({3}/{4}\right)}+{\pi})/{2}\,;
\label{J1}
\end{equation}
\item{if $a=0$,}
\begin{equation}
\int_{0}^{\infty}\frac{\zeta _{R}\! \left(i\,v \right)+\zeta _{R}\! \left(\frac{1}{2}-i\,v \right)}{\cosh \! \left(2\,\pi \,v \right)}d v
 = {\zeta \! \left({1}/{4}\right)}/{2}\,.
\label{J1a}
\end{equation}
\end{itemize}

Subtracting \eqref{J1} and \eqref{J1a} yields
\begin{equation}
\int_{0}^{\infty}\frac{ \zeta_{R} \! \left(1+i\,v \right)-\zeta_{R} \! \left(i\,v \right)}{\cosh \! \left(2\,\pi \,v \right)}d v
 = 
\left({\zeta \! \left({3}/{4}\right)}-{\zeta \! \left({1}/{4}\right)+{\pi}}\right)/{2}\,
\label{J1ab}
\end{equation}
since $\zeta(s)$ is self-conjugate. 

\begin{rem}  Note that 
\begin{itemize}
\item 
\eqref{J1ab} is convergent because as $v\rightarrow 0$,
\begin{equation}
\zeta \! \left(1+i\,v \right) \approx 
-\frac{i}{v}+\gamma +O\! \left(v \right)
\label{J1ab0}
\end{equation}
and only the imaginary part of the integrand diverges;
\item
\eqref{J1ab} measures the integrated area of $\zeta_{R}(s)$ between the boundaries of the critical strip, weighted by $\mathrm{sech}(2\pi\,v)$ and represents the transformation of \eqref{J1} and \eqref{J1a} into the Cauchy contour integral of the integrand over the boundary of the critical strip.

\end{itemize}
\end{rem}

\subsubsection{Complex \texorpdfstring{$a$}{Lg}} \label{sec:CmplxA}

As noted above, by letting $a\in\mathfrak{C}$, the locations of the poles in the derivation of \eqref{Master} remain inside the region $\mathfrak{S}_{b}$, shifting to the right or left according to the sign of the imaginary part of $a$. However the previous results remain valid. Let us consider the simple case $a=i\,t$ and $b=1$, where $0\leq t\in \mathfrak{R}$, to obtain

\begin{equation}
\zeta \! \left({1}/{2}+i\,t\right) = 
\frac{1}{2}\int_{-\infty}^{\infty}\frac{\zeta \left(i \left(t +v \right)\right)+\zeta \left(1+i\,(t -v) \right)}{\cosh \left(\pi \,v \right)}d v -\frac{\pi}{2\,\cosh \! \left(\pi \,t \right)}\,.
\label{K4x}
\end{equation}
Notice that the integral \eqref{K4x} is asymmetric with respect to $v=0$ and contains a singularity at $v=t$; however as before, the real part is finite, because

\begin{equation}
\underset{v \rightarrow t}{\mathrm{lim}}\! \frac{\zeta \! \left(i \left(t +v \right)\right)+\zeta \! \left(1+i\,(t -v)\right)}{\cosh \! \left(\pi \,v \right)}
 \approx 
\frac{i}{\cosh \! \left(\pi \,t \right) \left(v -t \right)}+O\! \left(1\right)
\label{Klim4}
\end{equation}
showing that the singularity of \eqref{K4x} occurs in the imaginary part of the integrand on the non-negative $v$-axis. If $t=0$, we obtain

\begin{equation}
\int_{0}^{\infty}\frac{\zeta_{R} \! \left(i\,v \right)+\zeta_{R} \! \left(1-i\,v \right)}{\cosh \! \left(\pi \,v \right)}d v
 = \zeta \! \left({1}/{2}\right)+{\pi}/{2}\,,
\label{K4xR1}
\end{equation}

to be compared to \eqref{J1ab}. Similarly, consider the case $a=1/2+it-b/2$, yielding the identity


\begin{align} \nonumber
\zeta \! \left({1}/{2}+i\,t \right)&=
\frac{1}{2\,{| b |}}\int_{-\infty}^{\infty}\frac{\zeta \left(\frac{1-b}{2}+i \left(t +v \right)\right)+\zeta \left(\frac{b +1}{2}+i \left(t-v  \right)\right)}{\cosh \left(\frac{\pi \,v}{b}\right)}d v\\
&-\frac{i\,\pi \,H(a,b)}{\left(\cos \! \left(\frac{\pi}{2\,b}\right) \sinh \! \left(\frac{\pi \,t}{b}\right)+i\,\cosh \! \left(\frac{\pi \,t}{b}\right) \sin \! \left(\frac{\pi}{2\,b}\right)\right) \left|b\right|}\,.
\label{Ex7ab}
\end{align}

Note that \eqref{Ex7ab} is independent of $b$ and particularly it is invariant under the replacement $b:=\, -b$; if $b=\pm 2$, we find
\begin{equation}
\zeta \! \left(1/2+i\,t \right) = 
\frac{1}{4}\int_{-\infty}^{\infty}\frac{\zeta \left(-\frac{1}{2}+i\,t +i\,v \right)+\zeta \left(\frac{3}{2}-i\,v +i\,t \right)}{\cosh \left(\frac{\pi \,v}{2}\right)}d v -\frac{\pi \,\sqrt{2} \left(i\,\sinh \! \left(\frac{\pi \,t}{2}\right)+\cosh \! \left(\frac{\pi \,t}{2}\right)\right)}{2\,\cosh \! \left(\pi \,t \right)}\,,
\label{ZhId}
\end{equation}
giving, in the case $t=0$
\begin{equation}
\int_{-\infty}^{\infty}\frac{\zeta \! \left(-{1}/{2}+2\,i\,v \right)+\zeta \! \left({3}/{2}-2\,i\,v \right)}{\cosh \! \left(\pi \,v \right)}d v
 = 2\,\zeta \! \left({1}/{2}\right)+\pi \,\sqrt{2}\,.
\label{ZhId0}
\end{equation}

Differentiating with respect to $b$ gives, in the case that $b=2$ and $t=0$,

\begin{equation}
\int_{-\infty}^{\infty}\frac{\left(-\zeta^{\left(1\right)}\! \left(-\frac{1}{2}+2\,i\,v \right)+\zeta^{\left(1\right)}\! \left(\frac{3}{2}-2\,i\,v \right)\right) \left(1-2\,i\,v \right)}{\cosh \! \left(\pi \,v \right)}d v
 = \frac{\sqrt{2}\,\pi  \left(\pi -4\right)}{4}\,,
\label{Ex7ab0}
\end{equation}
whereas differentiating with respect to $t$ in the same limit gives

\begin{equation}
\int_{-\infty}^{\infty}\frac{\zeta^{\left(1\right)}\! \left(-\frac{1}{2}+2\,i\,v \right)+\zeta^{\left(1\right)}\! \left(\frac{3}{2}-2\,i\,v \right)}{\cosh\left(\pi \,v \right)  }\,d v
 = \,
- \sqrt{2}\left(-\zeta \! \left(\frac{1}{2}\right) \left(\frac{\gamma}{2}+\frac{\ln \! \left(8\,\pi \right)}{2}+\frac{\pi}{4}\right) \sqrt{2}-\frac{\pi^{2}}{2}\right)\,.
\label{Ex7B}
\end{equation}

\begin{rem} If $b\rightarrow 0$, \eqref{Ex7ab} reduces to a tautology.\end{rem}

We note that, because the weighting function $\rm{sech}(\pi \, \it{v})$ is numerically small for $v\gg 0$, the form of \eqref{K4x} suggests that a reasonable numerical approximation to $\zeta(1/2+i\,t)$ could be obtained by expanding the integrand about $v=0$ and considering cases where $t\gg v>0$ to avoid the singularity, recognizing that the radius of convergence of such an expansion is $v=t$. When this is done, and the resulting integral is analytically evaluated (see \eqref{GRx} and \cite[Eq. (24.7.6)]{NIST}), we obtain

\begin{equation}
\zeta \! \left({1}/{2}+i\,t \right) \approx 
\frac{1}{2}\overset{N}{\underset{j =0}{\sum}}\; \frac{\mathcal{E}_{\,2j} \left(\zeta^{\left(2\,j \right)}\left(i\,t \right)+\zeta^{\left(2\,j \right)}\left(1+i\,t \right)\right)}{2^{2\,j}\,\Gamma \left(2\,j +1\right)}-\frac{2}{\pi \,\cosh \! \left(\pi \,t \right)}
\label{Zid}
\end{equation}
where $\mathcal{E}_{\,2j}$ are Euler numbers and $N \ll t^2$. As a result, \eqref{Zid} asserts that the function $\zeta(\sigma+i\,t)$, at any point on the critical line $\sigma=1/2$  is a (weighted) average of corresponding points on the boundary lines of the critical strip $\sigma=0$ and $\sigma=1$. Notice also that the last (lonely) term in \eqref{Zid} closely approaches zero as $t$ increases, but never theoretically vanishes, although practically it does so (numerically) , lurking in the deep background of any numerical study of the zeros of $\zeta(1/2+it)$.

\subsection{Variations} \label{sec:VarCrit1}

Other possibilities exist. Consider the case \eqref{IFg1d} in more detail, where


\begin{equation}
F(v)\equiv\,\frac{\zeta \! \left(a+i\,v  \right) \zeta \! \left(a +b -i\,v \right)}{\cosh \! \left(\frac{\pi \,v}{b}\right)}
\label{FCrit1}
\end{equation}
obeys the condition \eqref{Crit}. If $b>0$, there are three residues to consider, occurring at the points

\begin{align} \nonumber
&P_{1}=(a-1)i, \\ \nonumber
&P_{2}=(1-a-b)i  ,\\ 
&P_{3}=-ib/2\,,
\label{P123}
\end{align}
with respective residues
\begin{align} \nonumber
&R_{1}=R_{2}=\,-i\,\zeta \! \left(2\,a +b -1\right) \sec \! \left({\pi  \left(a -1\right)}/{b}\right)\,,\\ 
&R_{3}={i\,\zeta \! \left(a +{b}/{2}\right)^{2}\,b}/{\pi}\,.
\label{R123}
\end{align}
$R_{1}$ and $R_{2}$ only contribute if $1-b<a<1$; if $b<0$, $R_{1}$ and $R_{2}$ contribute only if $1<a<1-b$ and the sign of the right-hand side of \eqref{Master} reverses. In the case of equality, each such residue is reduced by half. A simple example follows: if $a=1/2,~b=1$ we reproduce \eqref{Intf2}; similarly if $a=b=1/2$ or $a=1$ and $b=-1/2$ we find

\begin{equation}
\int_{-\infty}^{\infty} \left[\frac{\zeta \! \left(1+i\,v \right) \zeta \! \left(\frac{1}{2}-i\,v \right)}{\cosh \! \left(2\,\pi \,v \right)}\right]_{R}d v
 = 
\zeta \! \left(\frac{1}{2}\right) \pi +{\zeta \! \left(\frac{3}{4}\right)^{2}}/{2}
\label{Ctx2m}
\end{equation}
because both $R_{1}$ and $R_{2}$ are reduced by half in both cases. An interesting variation occurs if we set $a=1/2-b$, yielding
\begin{equation}
\int_{-\infty}^{\infty}\frac{\zeta \! \left(\frac{1}{2}-b +i\,v \right) \zeta \! \left(\frac{1}{2}-i\,v \right)}{\cosh \! \left(\frac{\pi \,v}{b}\right)}d v
 = \,b\,\zeta \! \left(\frac{1}{2}-\frac{b}{2}\right)^{2}
\label{Ctx3}
\end{equation}
and consider the limit $b\rightarrow 0$, producing a tautology by first scaling the integration variable $v:=bv$; this allows us to identify
\begin{equation}
\underset{b \rightarrow 0}{\mathrm{lim}}\; \frac{1}{b\,\cosh \! \left(\frac{\pi \,v}{b}\right)}
 = \delta \! \left(v\right)\,.
\label{CoshLim}
\end{equation}
Finally, we consider the case $a=1/2-b/2$ to find
\begin{align} \nonumber
\int_{-\infty}^{\infty}\frac{\zeta \! \left(\frac{1}{2}-\frac{b}{2}+i\,v \right) \zeta \! \left(\frac{1}{2}+\frac{b}{2}-i\,v \right)}{\cosh \! \left(\frac{\pi \,v}{b}\right)}d v
& = 
2 \int_{0}^{\infty}\frac{ \left[\zeta \! \left(\frac{1}{2}-\frac{b}{2}+i\,v \right) \zeta \! \left(\frac{1}{2}+\frac{b}{2}-i\,v \right)\right]_{\,R}}{\cosh \! \left(\frac{\pi \,v}{b}\right)}\,d v \\
& = 
\,\pm\, b\,\zeta \! \left({1}/{2}\right)^{2}\pm\,X\,\pi \,\sec \! \left(\frac{\pi  \left(-{1}-{b}\right)}{2b}\right)
\label{Ctx4}
\end{align}

where $X=0$ if $|b|<1$, $X=1/2$ if $b=\pm 1$, $X=1$ otherwise, and the overall sign of the right-hand side corresponds to the sign of $b$. This identity can be interpreted as a measurement of the correlation between function values of $\zeta(1/2 +iv)$ on diagonally opposite sides of, and equidistant from, the critical line ($b=0$), relative to the kernel ${\mathrm{sech}}(\pi v/b)$. If $b=1$, \eqref{Ctx4} reduces to \eqref{Ct2e} below, and, taking the functional equation of $\zeta(s)$ into account, we find the alternative

\begin{align} \nonumber
\int_{-\infty}^{\infty}\left[\zeta \! \left(\frac{1}{2}+\frac{b}{2}-i\,v \right)^{2}\Gamma \! \left(\frac{1}{2}+\frac{b}{2}-i\,v \right)\,\frac{\left(2\,\pi \right)^{i\,v}\,\sin \! \left(\pi  \left(\frac{1}{4}-\frac{b}{4}+\frac{i\,v}{2}\right)\right) }{\cosh \! \left(\frac{\pi \,v}{b}\right)}\right]_{R}d v \\
 = 
\frac{\left(2\,\pi \right)^{\frac{1}{2}+\frac{b}{2}}}{2} \left(\zeta \! \left(\frac{1}{2}\right)^{2}\,b +X\,\pi \,\sec \! \left(\frac{\pi  \left(-1-b\right)}{2b}\right)\right)\,.
\label{CTy1a}
\end{align}
Therefore, setting $b=1$ gives
\begin{equation}
\int_{-\infty}^{\infty} \left[\left(2\,\pi \right)^{i\,v}\,\zeta \! \left(1-i\,v\right)^{2}\, \Gamma \! \left(1-i\,v \right)\right]_{I}\;\frac{\sinh \! \left(\frac{\pi \,v}{2}\right)}{\cosh \! \left(\pi \,v \right)}d v
 = 
\pi  \left(\frac{\pi}{2}-\zeta \! \left(\frac{1}{2}\right)^{2}\right)
\label{Ct4b1}
\end{equation}

and if $b=-1$, we find the corresponding identity
\begin{equation}
\int_{-\infty}^{\infty}\left[\left(2\,\pi \right)^{i\,v}\,\zeta \! \left(-i\,v \right)^{2}\, \Gamma \! \left(-i\,v \right)\right]_{R}\;\frac{\cosh \! \left(\frac{\pi \,v}{2}\right)}{\cosh \! \left(\pi \,v \right)}\,d v
 = \frac{\zeta \! \left(\frac{1}{2}\right)^{2}}{2}-\frac{\pi}{4}\,.
\label{Ct4bm1}
\end{equation}

\subsection{Application to \texorpdfstring{\eqref{FgenA}}{Lg}} \label{sec:App}


Here we consider 
\begin{equation}
\zeta \! \left(a+i\,v  \right) \zeta \! \left(a +b -i\,v  \right) \left(\frac{1}{\cosh \! \left(\pi \,b\,v \right)}-\frac{1}{\cosh \! \left(\pi\,b  \left(i\,b +v \right)  \right)}\right)
\label{Frit4}
\end{equation}
which obeys \eqref{Crit} by means of \eqref{FgenA}. Now, there are four residues to consider, occurring at the points

\begin{align} \nonumber
&P_{1}=(a-1)i, \\ \nonumber
&P_{2}=(1-a-b)i  ,\\  \nonumber
&P_{3}=-i/(2b)\,\\
&P_{4}=\frac{i }{2\,b}\left(1-2\,b^{2}\right)
\label{P1234}
\end{align}
with respective (lengthy) residues easily evaluated. In the case $a=1/2, ~ b=1/2$, after considerable elementary, but intricate  simplification, we obtain the identity

\begin{align} \nonumber
\sqrt{2}\int_{-\infty}^{\infty}&\left[\zeta \! \left({1}/{2}+i\,v \right) \zeta \! \left(1-i\,v \right)\right]_{I}\,\frac{ \, \sinh \! \left(\frac{\pi \,v}{2}\right)}{\cosh \! \left(\pi \,v \right)}\,d v
+\sqrt{2} \int_{-\infty}^{\infty}\left[\zeta \! \left({1}/{2}+i\,v \right) \zeta \! \left(1-i\,v \right)\right]_{R}\,\frac{ \cosh \! \left(\frac{\pi \,v}{2}\right)}{\cosh \! \left(\pi \,v \right)}\,d v \\
&-\int_{-\infty}^{\infty} \left[\zeta \! \left({1}/{2}+i\,v \right) \zeta \! \left(1-i\,v \right)\right]_{R}\,\frac{1}{\cosh \! \left(\frac{\pi \,v}{2}\right)}\,d v  
 = \pi \,\zeta \! \left({1}/{2}\right) \left(\sqrt{2}-1\right)\,.
\label{C4f}
\end{align}
Compare with \eqref{Ctx2m}. However, using $a=1/2-b/2$ as in \eqref{Ctx4}, we find that for $-1/\sqrt{2}<b<1/\sqrt{2}$, none of the points defined in \eqref{P1234} lie in the range defined by \eqref{Sdefb}, giving

\begin{equation}
\int_{-\infty}^{\infty}\zeta \! \left(\frac{1}{2}-\frac{b}{2}+i\,v \right) \zeta \! \left(\frac{1}{2}+\frac{b}{2}-i\,v \right) \left(\mathrm{sech}\! \left(\pi \,b\,v \right)-\sec \! \left(\pi \,b   \left(i\,v -b \right)\right)\right)d v
 = 0\,.
\label{Crit4bB}
\end{equation}
If $1/\sqrt{2}<|b|<1$, we find
\begin{align} \nonumber
\int_{-\infty}^{\infty}\zeta \! \left(\frac{1}{2}-\frac{b}{2}+i\,v \right)& \zeta \! \left(\frac{1}{2}+\frac{b}{2}-i\,v \right) \left(\mathrm{sech}\! \left(\pi \,b\,v \right)-\sec \! \left(\pi  \left(i\,v -b \right) b \right)\right)d v \\
 &= 
\frac{2}{{| b |}}\,\zeta \! \left(\frac{b^{2}+b -1}{2\,b}\right) \zeta \! \left(\frac{-b^{2}+b +1}{2\,b}\right)
\label{Crit4bA}
\end{align}
and, if $1<|b|<\sqrt{6}/2$, we obtain

\begin{align} \nonumber
\int_{-\infty}^{\infty}\zeta \! \left(\frac{1}{2}-\frac{b}{2}+i\,v \right)& \zeta \! \left(\frac{1}{2}+\frac{b}{2}-i\,v \right) \left(\mathrm{sech}\! \left(\pi \,b\,v \right)-\sec \! \left(\pi  \left(i\,v -b \right) b \right)\right)d v \\
 &= 
\frac{2\,}{{| b |}}\zeta \! \left(\frac{b^{2}+b -1}{2\,b}\right) \zeta \! \left(\frac{-b^{2}+b +1}{2\,b}\right)+\frac{4\,\pi\,\sin \! \left({\pi \,b^{2}}/{2}\right) \sin \! \left({\pi \,{| b |}}/{2}\right) }{\cos \! \left(\pi \,b \right)+\cos \! \left(\pi \,b^{2}\right)}\,.
\label{Crit4bC}
\end{align}

\begin{rem}
\begin{equation} \nonumber
\end{equation}
\begin{itemize}
\item{}

When $b$ lies in the range $-1/\sqrt{2}<b<1/\sqrt{2}$, \eqref{Crit4bB} demonstrates that the two functions $\zeta(1/2\pm b/2 \mp iv)$ lying on equal but diagonally opposite sides of the critical line $v=1/2$ are orthogonal with respect to integration employing the kernel

\begin{equation}
\frac{1}{\cos \! \left(\pi \,b \left(i\,v -b \right)\right)} = 
\frac{-i\,\sinh \! \left(\pi \,b\,v \right) \sin \! \left(\pi \,b^{2}\right)+\cosh \! \left(\pi \,b\,v \right) \cos \! \left(\pi \,b^{2}\right)}{\left(\sinh^{2}\left(\pi \,b\,v \right)\right) \left(\sin^{2}\left(\pi \,b^{2}\right)\right)+\left(\cosh^{2}\left(\pi \,b\,v \right)\right) \left(\cos^{2}\left(\pi \,b^{2}\right)\right)}
\label{Kernel}
\end{equation}
\item
It is of some interest to rewrite \eqref{Crit4bB} in more transparent form

\begin{align} \nonumber
&\cos \! \left(\pi \,b^{2}\right) \int_{-\infty}^{\infty}\frac{ \left[\zeta \! \left(\frac{1}{2}-\frac{b}{2}+i\,v \right) \zeta \! \left(\frac{1}{2}+\frac{b}{2}-i\,v \right)\right]_{R}\, \cosh \! \left(\pi \,b\,v \right)}{\sin^{2}\left(\pi \,b^{2}\right)-\cosh^{2}\left(\pi \,b\,v\right)}d v\\
 &= 
-\int_{-\infty}^{\infty}\frac{\left[\zeta \! \left(\frac{1}{2}-\frac{b}{2}+i\,v \right) \zeta \! \left(\frac{1}{2}+\frac{b}{2}-i\,v \right)\right]_{R}\,}{\cosh \! \left(\pi b\,v \right)}d v -\sin \! \left(\pi \,b^{2}\right) \int_{-\infty}^{\infty}\frac{ \left[\zeta \! \left(\frac{1}{2}-\frac{b}{2}+i\,v \right) \zeta \! \left(\frac{1}{2}+\frac{b}{2}-i\,v \right)\right]_{I}\, \sinh \! \left(\pi \,b\,v \right)}{\sin^{2}\left(\pi \,b^{2}\right)-\cosh^{2}\left(\pi \,b\,v \right)}d v 
\label{Crit4d}
\end{align}
and inquire about the (obscure) limit of the left-hand side as $b\rightarrow 1/\sqrt{2}$, since the integral diverges, while the coefficient vanishes. From \eqref{Crit4bB} we have

\begin{align} \nonumber
&\underset{b \rightarrow \frac{1}{\sqrt{2}}^{-}}{\mathrm{lim}}\! \cos \! \left(\pi \,b^{2}\right)\int_{-\infty}^{\infty}\frac{ \left[\zeta \! \left(\frac{1}{2}-\frac{b}{2}+i\,v \right) \zeta \! \left(\frac{1}{2}+\frac{b}{2}-i\,v \right)\right]_{R}\, \cosh \! \left(\pi \,b\,v \right)}{-\cos^{2}\left(\pi \,b^{2}\right)-\sinh^{2}\left(\pi \,b\,v \right)}d v  \\
& = 
 \int_{-\infty}^{\infty}\,\frac{ \left[\zeta \! \left(\frac{1}{2}-\frac{\sqrt{2}}{4}+i\,v \right) \zeta \! \left(\frac{1}{2}+\frac{\sqrt{2}}{4}-i\,v \right)\right]_{I}}{\sinh \! \left({\pi \,v}/{\sqrt{2}}\right)}-\frac{ \left[\zeta \! \left(\frac{1}{2}-\frac{\sqrt{2}}{4}+i\,v \right) \zeta \! \left(\frac{1}{2}+\frac{\sqrt{2}}{4}-i\,v \right)\right]_{R}}{\cosh \! \left({\pi \,\,v}/{\sqrt{2}}\right)}\,d v,
\label{Crit4ELb}
\end{align}
and from \eqref{Crit4bA} we have

\begin{align} \nonumber
&\underset{b \rightarrow \frac{1}{\sqrt{2}}^{+}}{\mathrm{lim}}\! \cos \! \left(\pi \,b^{2}\right)\int_{-\infty}^{\infty}\frac{\left[\zeta \! \left(\frac{1}{2}-\frac{b}{2}+i\,v \right) \zeta \! \left(\frac{1}{2}+\frac{b}{2}-i\,v \right)\right]_{R}\, \cosh \! \left(\pi \,b\,v \right)}{-\cos^{2}\left(\pi \,b^{2}\right)-\sinh^{2}\left(\pi \,b\,v \right)}d v  
   \\ \nonumber
&=\underset{b \rightarrow \frac{1}{\sqrt{2}}^{-}}{\mathrm{lim}}\! \cos \! \left(\pi \,b^{2}\right)\int_{-\infty}^{\infty}\frac{ \left[\zeta \! \left(\frac{1}{2}-\frac{b}{2}+i\,v \right) \zeta \! \left(\frac{1}{2}+\frac{b}{2}-i\,v \right)\right]_{R}\, \cosh \! \left(\pi \,b\,v \right)}{-\cos^{2}\left(\pi \,b^{2}\right)-\sinh^{2}\left(\pi \,b\,v \right)}d v  \\
&\hspace{25pt}+ 2\ \sqrt{2}\zeta \! \left(\frac{\left(\sqrt{2}-1\right) \sqrt{2}}{4}\right) \zeta \! \left(\frac{\left(1+\sqrt{2}\right)}{4}\right) ,
\label{Crit4EUb}
\end{align}
from which we deduce, and verify numerically, that 

\begin{align} \nonumber
& \int_{-\infty}^{\infty}\,\frac{ \left[\zeta \! \left(\frac{1}{2}-\frac{\sqrt{2}}{4}+i\,v \right) \zeta \! \left(\frac{1}{2}+\frac{\sqrt{2}}{4}-i\,v \right)\right]_{I}}{\sinh \! \left({\pi \,v}/{\sqrt{2}}\right)}-\frac{ \left[\zeta \! \left(\frac{1}{2}-\frac{\sqrt{2}}{4}+i\,v \right) \zeta \! \left(\frac{1}{2}+\frac{\sqrt{2}}{4}-i\,v \right)\right]_{R}}{\cosh \! \left({\pi \,\,v}/{\sqrt{2}}\right)}\,d v\\
& = 
- \sqrt{2}\zeta \! \left(\frac{\left(\sqrt{2}-1\right) \sqrt{2}}{4}\right) \zeta \! \left(\frac{\left(1+\sqrt{2}\right)}{4}\right)\,,
\label{Ded1}
\end{align}
in which case, following tradition, we define

\begin{equation} 
\underset{b \rightarrow \frac{1}{\sqrt{2}}}{\mathrm{lim}}\! \cos \! \left(\pi \,b^{2}\right)\int_{-\infty}^{\infty}\frac{ \left[\zeta \! \left(\frac{1}{2}-\frac{b}{2}+i\,v \right) \zeta \! \left(\frac{1}{2}+\frac{b}{2}-i\,v \right)\right]_{R}\, \cosh \! \left(\pi \,b\,v \right)}{\cos^{2}\left(\pi \,b^{2}\right)+\sinh^{2}\left(\pi \,b\,v \right)}d v=0\,,
\label{CritLim}
\end{equation}
representing the mid-point of its discontinuous upper and lower limits.

\item{}
For the same reason that the right-hand side of \eqref{Crit4bB} vanishes, it is possible to show that, if $\Re(s)>0$ and $-1/\sqrt{2}<b<1/\sqrt{2}$, then

\begin{equation}
\int_{-\infty}^{\infty}\zeta \! \left(\frac{1}{2}-\frac{b}{2}+i\,v \right)^{n}\zeta \! \left(\frac{1}{2}+\frac{b}{2}-i\,v \right)^{n} \left(\mathrm{sech}\! \left(\pi \,b\,v \right)^{s}-\mathrm{sech} \left(\pi \,b   \left(i\,v +b \right)\right)^{s}\right)d v
 = 0\,.
\label{Crit4bnS}
\end{equation}
\item{If $b=0$, the kernal of \eqref{Crit4bB} vanishes}.

\end{itemize}
\end{rem}

\subsection{Application to \texorpdfstring{\eqref{Crit2}}{Lg}} \label{sec:VarCrit2}


Here we consider 

\begin{equation}
F \! \left(v \right) = 
\frac{f \! \left(a-i\,v  \right)\,f \! \left( a -1+i\,v\right)}{\cosh \! \left(\pi \,v \right)}
\label{FCrit2}
\end{equation}
which obeys both \eqref{Crit} and \eqref{Crit2} giving

\begin{equation}
\int_{-\infty}^{\infty}\frac{\zeta \! \left(a-i\,v  \right) \zeta \! \left(a -1+i\,v \right)}{\cosh \! \left(\pi \,v \right)}d v
 = 
\zeta \! \left(-\frac{1}{2}+a \right)^{2}-2\,\pi \,X\,\zeta \! \left(2\,a -2\right) \sec \! \left(\pi \,a \right)
\label{Ct2}
\end{equation}
where $X=1$ if $1<a<2$, $X=1/2$ in  the case of equality, otherwise zero. If $a=3/2$, \eqref{Ct2} reduces to \eqref{Intf2}; if $a=1/2$ we have

\begin{equation}
\int_{-\infty}^{\infty}\frac{\zeta \! \left(\frac{1}{2}-i\,v \right) \zeta \! \left(-\frac{1}{2}+i\,v \right)}{\cosh \! \left(\pi \,v \right)}d v
 = {\frac{1}{4}}
\label{Ct2d}
\end{equation}
and if $a=1$ we have
\begin{equation}
\int_{-\infty}^{\infty}\frac{ \left[\zeta \! \left(1-i\,v\right) \zeta \! \left(i\,v \right)\right]_{R}}{\cosh \! \left(\pi \,v \right)}d v
 = \zeta \! \left({1}/{2}\right)^{2}-\frac{\pi}{2}\,.
\label{Ct2e}
\end{equation}
\begin{rem} Notice that $X=1/2$ has been used in this case, and only the imaginary part of the integrand diverges.
\end{rem}



\section{Examples of not-so-obvious convergence} \label{sec:NotSoC}
\subsection{Zeta}
Since the integral appears to converge numerically, the use of \eqref{Crit} gives

\begin{equation}
\int_{-\infty}^{\infty}\left(\frac{\zeta \! \left(a+i\,v  \right)}{a+i\,v }-\frac{\zeta \! \left( a +b-i\,v \right)}{a +b-i\,v }\right)d v
 = -2\,\pi\,,\hspace{10pt} 1-b<a<1,
\label{T1}
\end{equation}

suggesting that we consider the special case
\begin{equation}
\int_{-\infty}^{\infty}\left(\frac{\zeta \! \left(\frac{1}{2}+i\,v \right)}{\frac{1}{2}+i\,v}-\frac{\zeta \! \left(\frac{3}{2}-i\,v \right)}{\frac{3}{2}-i\,v}\right)d v
 = -2\,\pi\,.
\label{IntF6}
\end{equation}

Following the method discussed in Remark \ref{Remx}, the second term of the integrand is amenable to analysis, by considering the general form
\begin{equation}
\int_{-\infty}^{\infty}\frac{\zeta \! \left(a +b-i\,v  \right)}{ a +b-i\,v}d v=\pi.
\label{JaJbx}
\end{equation}
valid if $a+b>1$ - see Appendix \ref{sec:AppB}. Hence, if $b=1$ we find

\begin{equation}
\int_{-\infty}^{\infty}\frac{\zeta \! \left(a+i\,v  \right)}{a+i\,v }d v
 = -\pi,\hspace{10pt} 0<a<1,
\label{JA1}
\end{equation}
and if $a=1/2$ we obtain
\begin{equation}
\int_{-\infty}^{\infty}\frac{\zeta \! \left(\frac{1}{2}+i\,v \right)}{\frac{1}{2}+i\,v}d v
 = -\pi\,,
\label{Jh}
\end{equation}
which can also be written as 
\begin{equation}
\int_{0}^{\infty}\frac{2\,v\,\zeta_{I} \! \left(\frac{1}{2}+i\,v \right) +\zeta_{R} \! \left(\frac{1}{2}+i\,v \right)}{\frac{1}{4}+v^{2}}\,d v
 = -\pi\,.
\label{Jh1}
\end{equation}

{\bf Remarks:} 
\begin{itemize}
\item{}
The result \eqref{Jh} can also be obtained by writing \eqref{JaJb} as a contour integral after a change of variables and translating it past the pole at $a=1$. Thus we similarly find
\begin{equation}
\int_{-\infty}^{\infty}\frac{\zeta \! \left(a+i\,v  \right)}{a+i\,v }d v
 = 0,\hspace{10pt} a<0,
\label{JA1m}
\end{equation}
an identify that confounds numerical verification; this suggests that the integral is probably not numerically convergent and only meaningful as a consequence of regularization (see \cite[Appendix C]{Mdet}).
\item{}
Either of the results \eqref{Jh} and/or \eqref{Jh1} are amenable to numerical verification. This is an unexpected result, because it has been commonly observed that $\zeta_{R} \! \left(\frac{1}{2}+i\,v \right)$ is usually greater than zero.
\item{}
The algebraic relationship between $\zeta_{R}(1/2+iv)$ and $\zeta_{I}(1/2+iv)$ in terms of fundamental quantities, can be found in \cite[Eq. 143]{Milgram}.
\end{itemize}
\subsection{Hurwitz Zeta} \label{sec:HurZ}
Here, we consider a special case of the Hurwitz Zeta function
\begin{equation}
F=\frac{\,\zeta \left(\frac{1}{2}+i\,v , w +\frac{1}{2}\right)}{\frac{1}{2}+i\,v}-\frac{\,\zeta \left(\frac{3}{2}-i\,v, w +\frac{1}{2}\right)}{ \frac{3}{2}-i\,v}\,,
\label{Fz}
\end{equation}
which, with $w=0$ leads us to consider
\begin{equation}
F=\frac{ \left(2^{\frac{1}{2}+i\,v}-1\right) \zeta \! \left(\frac{1}{2}+i\,v \right)}{\frac{1}{2}+i\,v}-\frac{\left(2^{\frac{3}{2}-i\,v }-1\right) \zeta \! \left( \frac{3}{2}-i\,v\right)}{ \frac{3}{2}-i\,v}\,,
\label{F1}
\end{equation}
both of which satisfy the criterion \eqref{Crit}. From \eqref{Master} by utilizing the residues at the point $v=-i/2$, we obtain
\begin{equation}
\int_{-\infty}^{\infty}\left(\frac{\left(2^{\frac{1}{2}+i\,v}-1\right) \zeta \! \left(\frac{1}{2}+i\,v \right)}{\frac{1}{2}+i\,v}-\frac{\left(2^{\frac{3}{2}-i\,v}-1\right) \zeta \! \left(\frac{3}{2}-i\,v \right)}{\frac{3}{2}-i\,v}\right)d v
 = -2\,\pi\,,
\label{AA}
\end{equation}
which, after subtracting  \eqref{IntF6}, yields the identity
\begin{equation}
\int_{-\infty}^{\infty}\frac{\zeta \! \left(\frac{1}{2}+i\,v \right) 2^{i\,v}}{\frac{1}{2}+i\,v}d v -\int_{-\infty}^{\infty}\frac{\zeta \! \left(\frac{3}{2}-i\,v \right) 2^{1-i\,v }}{\frac{3}{2}-i\,v}d v 
 = -2\,\pi \,\sqrt{2}\,.
\label{AAE1}
\end{equation}
Employing the same method as presented in Appendix \ref{sec:AppB}, we find
\begin{equation}
\int_{-\infty}^{\infty}\frac{\zeta \! \left(\frac{3}{2}-i\,v \right) 2^{-i\,v}}{\frac{3}{2}-i\,v}d v
 = \frac{3\,\pi \,\sqrt{2}}{4}
\label{J5e6}
\end{equation}
in which case
\begin{equation}
\int_{-\infty}^{\infty}\frac{\zeta \! \left(\frac{1}{2}+i\,v \right) 2^{i\,v}}{\frac{1}{2}+i\,v}d v
 = -\frac{\pi \,\sqrt{2}}{2}\,.
\label{J8b}
\end{equation}
\begin{rem} By treating \eqref{J5e6} as a contour integral and translating it past the pole at $v=-i/2$, it is possible to verify  \eqref{AAE1}.
\end{rem}

\subsection{Multiplicative} \label{sec:Mult}


Here we consider examples of \eqref{Master2a}; specifically, define

\begin{equation}
F\equiv\,\frac{\zeta \! \left(i\,v +a \right)^{r}\,\zeta \! \left(-i\,v +a +b \right)^{r}}{\left(-i\,v +b \right)^{p} \left(-i\,v +a +b \right)^{q}}
\label{Fpqr}
\end{equation}
which obeys \eqref{Crit2a} and therefore \eqref{Master2a} with
\begin{equation}
h(v)=1+\frac{\left(-i\,v +a +b \right)^{q} \left(-i\,v +b \right)^{p}}{\left(i\,v \right)^{p} \left(i\,v +a \right)^{q}}\,
\label{Hex1}
\end{equation}
where $p,\,q$ and $r$ must be chosen such that the integral exists. To make use of \eqref{Fpqr} we require that if $p>0$ then $p\in\mathbb{N}$ and/or similarly for $q$ in order that a residue be defined. Both cases will be explored in the following examples.

\subsubsection{ \texorpdfstring{$r=1$}{Lg}}

There are six possible singularities inside the strip $\mathfrak{S}_b$, those being

\begin{align*}  
&P_{1}=\,i(a-1)\\ 
&P_{2}=\,-i(a+b-1) \\ 
&P_{3}=-ib \\
&P_{4}=\,-i(a+b) \\
&P_{5}= 0 \\
&P_{6}=ia 
\end{align*}
\begin{equation}
\label{SingPts}
\end{equation} 

of which $P_{3}$ and $P_{5}$ residues will always contribute, each reduced by a factor of two since they sit on the boundary. Suppose $p=q=1$, and $b>0$, then 
\begin{align}
\int_{0}^{\infty}&\Re \! \left(\frac{\zeta \! \left( a +b-i\,v \right) \zeta \! \left(a+i\,v  \right) \left(i\,b +2\,v \right)}{\left( b-i\,v \right) \left( a +b-i\,v \right)  \left( a+i\,v \right)\,v}\right)d v &
\label{FintB1}  \\=\,&
 \frac{\pi \,\zeta \! \left(a +b \right) \zeta \! \left(a \right)}{2 \left(a +b \right) a}-\frac{\pi \,\zeta \! \left(2\,a -1+b \right) \left(2\,a +b -2\right)}{\left(a +b -1\right) \left(2\,a -1+b \right) \left(a -1\right)}\, \hspace{10pt} &\mathrm{if } ~1-b<a<1\,;\\
 =\,& \frac{\pi \,\zeta \! \left(a +b \right) \zeta \! \left(a \right)}{2 \left(a +b \right) a}\hspace{10pt} &\mathrm{if } ~a>1~ \mathrm{or} ~a<-b<0\,;\label{Fint1B} \\
 =\,& \frac{\pi \,\zeta \! \left(a +b \right) \zeta \! \left(a \right)}{2 \left(a +b \right) a}+\frac{\pi \,\zeta \! \left(2\,a +b \right)}{2 \left(a +b \right) a} \hspace{10pt} &\mathrm{if } -b<a<0\,.
\label{Fint1C}
\end{align}

For example, if $a=1/4$ and $b=1$ so that additional (full) residues associated with $P_{1}$ and $P_{2}$ contribute; after some simplification, we find the (numerically verifiable) identity

\begin{equation}
\int_{0}^{\infty}\Im \! \left(\frac{\zeta \! \left({5}/{4}-i\,v \right) \left(1-2\,i\,v \right) \zeta \! \left({1}/{4}+i\,v \right)}{\left(1-i\,v \right) \left(5-4\,i\,v \right) \left(1+4\,i\,v \right)\,v }\right)\,d v
 = 
\frac{\pi \,\zeta \! \left(\frac{1}{2}\right)}{3}-\frac{\pi \,\zeta \! \left(\frac{1}{4}\right) \zeta \! \left(\frac{5}{4}\right)}{10}\,,
\label{Fint1}
\end{equation}
noting that the divergence of the integrand at $v=0$ only occurs in the real part. 


\subsubsection{ \texorpdfstring{$r=0$}{Lg}}

With the exception of $P_{1}$ and $P_{2}$, the singular points are the same as presented in \eqref{SingPts} and the general cases are similar. However, because in this case there is no need to separate the real and imaginary parts of either $\zeta(a+iv)$ or $\zeta(a+b-iv)$, it is possible to obtain a (relatively) compact integral representation in terms of real variables. For example, if $p=1,~ q=-1/2$ and $a,b>0$ we find

\begin{equation}
\int_{0}^{\infty}\left(-\frac{\sqrt{\sqrt{a^{2}+v^{2}}-a}}{v}+\frac{v\,\sqrt{\sqrt{v^{2}+\left(a +b \right)^{2}}-a -b}}{b^{2}+v^{2}}+\frac{b\,\sqrt{\sqrt{v^{2}+\left(a +b \right)^{2}}+a +b}}{b^{2}+v^{2}}\right)d v
 = \pi \,{\sqrt{a/2}}\,.
\label{FintBx}
\end{equation}
Notice that \eqref{FintBx} is independent of the variable $b$, and considering the limit $a\rightarrow 0$ yields

\begin{equation}
\int_{0}^{\infty}\left(\frac{1}{\sqrt{v}}-\frac{\sqrt{\sqrt{v^{2}+1}+1}}{\sqrt{v^{2}+1}}\right)d v
 = 0\,,
\label{FintBx2}
\end{equation} 
an identity known to Mathematica\cite{Math23}.


\subsubsection{ \texorpdfstring{$r=\,-1$}{Lg}} \label{sec:Rm1}


An interesting variation is the case $r=-1$ since the integral now involves the zeros of the zeta function. Consider the variation $p=0,~q=1$ and $0<b<a$, yielding the identity

\begin{equation}
\int_{0}^{\infty}\Re \! \left(\frac{ 2\,v+i\,b}{\zeta \! \left(a+i\,v  \right) \left(b-i\,v  \right) \zeta \! \left(a+b-i\,v  \right)\, v}\right)d v
 = \frac{\pi}{2\,\zeta \! \left(a +b \right) \zeta \! \left(a \right)}\,.
\label{FintG1}
\end{equation}
It is interesting to note that, although the poles associated with the non-trivial zeros of the zeta functions do not lie in the region $\mathfrak{S}_{b}$, even if they did, the residues of the two functions of different argument cancel, whether or not one assumes that they lie on the critical line. In this example, the trivial zeros do not lie in $\mathfrak{S}_{b}$. By setting $a=1$, we further obtain

\begin{equation}
\int_{0}^{\infty}\Re \! \left(\frac{2\,v+i\,b}{\zeta \! \left(1+i\,v\right) \left(b-i\,v \right) \zeta \! \left(b+1-i\,v\right)\, v}\right)d v
 = 0 \,\hspace{20pt} b>1,
\label{FintG1a}
\end{equation}
which can be written as the interesting transformation

\begin{equation}
\int_{0}^{\infty}\Re \! \left(\frac{1}{\zeta \! \left(1+i\,v \right) \left(b-i\,v  \right) \zeta \! \left(b+1-i\,v \right)}\right)d v
 = 
\int_{0}^{\infty}\Im \! \left(\frac{1}{v\,\zeta \! \left(1+i\,v \right) \zeta \! \left(b+1-i\,v  \right)}\right)d v\,.
\label{FintG1b}
\end{equation}
Notice that in the case $b=0$ \eqref{FintG1b} reduces to a tautology. Finally, we consider the case where the trivial zeros of the zeta functions contribute to the poles of the integrand. If $b>0$, $-b<a+2n<0$ and $a+2m<1$ where $m,n$ label the trivial zeros of the zeta functions, the general form is
\begin{align} \nonumber
&\int_{0}^{\infty}\Re \! \left(\frac{i\,b +2\,v}{\zeta \! \left(i\,v +a \right) \left(b-i\,v \right) \zeta \! \left(a+b-i\,v \right) v}\right)d v
 = 
\frac{\pi}{2\,\zeta \! \left(a +b \right) \zeta \! \left(a \right)}\\&
-\frac{\pi  \left(a +\frac{b}{2}+2\,n \right)}{\zeta^{\left(1\right)}\! \left(-2\,n \right) \left(a +b +2\,n \right) \zeta \! \left(2\,a +2\,n +b \right) \left(a +2\,n \right)}-\frac{\pi  \left(a +\frac{b}{2}+2\,m \right)}{\zeta^{\left(1\right)}\! \left(-2m \right) \left(a +b +2\,m \right) \zeta \! \left(2\,a +2\,m +b \right) \left(a +2m \right)}.
\label{FintG2}
\end{align}
As a specific example, if $a=-9/2$ and $b=4$ requiring minor evaluation of limits with $m,n=1~\mathrm{and}~2$, then 

\begin{equation}
\int_{0}^{\infty}\Re \! \left(\frac{v +2\,i}{\zeta \! \left(-\frac{9}{2}+i\,v \right) \left(i\,v -4\right) \zeta \! \left(-\frac{1}{2}-i\,v \right) v}\right)d v
 = 
\frac{96\,\pi^{5}}{7\,\zeta \! \left(5\right)}-\frac{64\,\pi^{3}}{\zeta \! \left(3\right)}-\frac{\pi}{4\,\zeta \! \left(-\frac{1}{2}\right) \zeta \! \left(-\frac{9}{2}\right)},
\label{FintG3}
\end{equation}
where the identities $\zeta^{(1)}(-2)=\,-\zeta(3)/(4\pi^{2})$ and $\zeta^{(1)}(-4)=\,3\zeta(5)/(4\pi^{4})$ have been used \cite{WikiPval}.

\subsubsection{r=2}

There are six possible singularities inside the strip $\mathfrak{S}_b$, and, as an example of what can be obtained we present the identity
\begin{align} \nonumber
\int_{0}^{\infty}& \left[\frac{\zeta \! \left(2-i\,v \right)^{2}\,\zeta \! \left(\frac{1}{2}+i\,v \right)^{2} \left(18\,v^{2}+27\,i\,v -32\right) \left(4\,v +3\,i\right)}{\left(3/2-i\,v \right) \left(2-i\,v \right)^{3} \left(\frac{1}{2}+i\,v \right)^{3}\,v}\right]_{R}d v\\
& = 
\frac{32\,\pi }{9} \left(-\frac{\pi^{4}\,\zeta \left(\frac{1}{2}\right)^{2}}{4}+\left(-\frac{23\,\gamma}{3}+\frac{49}{2}\right) \zeta \! \left(\frac{3}{2}\right)^{2}+\frac{23\,\zeta \left(\frac{3}{2}\right) \zeta^{(1)}\left(\frac{3}{2}\right)}{3}\right)
\label{FintAb}
\end{align}
in the case that $r=2,~p=1,~q=3,a=1/2$ and $b=3/2$.

\section{Other functions - Trigonometric}

Here we consider trigonometric functions $sin$ and $cos$ obeying \eqref{Crit}.
\subsection{Cosine} \label{sec:Cos}
We begin by defining
\begin{equation}
F \! \left(v \right) \equiv 
\frac{\cos^{s}\left(\pi  \left(\frac{b}{2}-i\,v \right)\right)}{\cosh \! \left(\frac{\pi \,v}{b}\right)}\,.
\label{C}
\end{equation}
In order that the integral \eqref{Master} converges, we require $s<1/|b|$ and, in order that the residues be well-defined, we require either that $s\geq 0$ or $s=-1,-2\dots$, all the while noting that $F(v)$ is not analytic at $v=0$ when $s>0$, if $|b|=1$, limiting many cases in this section to $b<1$. First of all, if $-1<b<1$ and $1/|b|>s>0$, then there is a single simple pole in the region defined by $\mathfrak{S}_{b}$ leaving us with the identity

\begin{equation}
\int_{-\infty}^{\infty}\frac{\left(\cosh^{2}\left(\pi \,v \right)-\sin^{2}\left(\frac{\pi \,b}{2}\right)\right)^{{s/}{2}}\,\cos \! \left(s\,\arctan \! \left(\tan \! \left(\frac{\pi \,b}{2}\right) \tanh \! \left(\pi \,v \right)\right)\right)}{\cosh \! \left(\frac{\pi \,v}{b}\right)}d v=|b|\,,
\label{CR1}
\end{equation}
which, after a simple change of variables, can be written as

\begin{equation}
\cos^{s}\left({\pi \,b}/{2}\right)\int_{-1}^{1}\left(\frac{1+B^{2}\,v^{2}}{1-v^{2}}\right)^{{s}/{2}}\,\frac{\cos \! \left(s\,\arctan \! \left(B\,v \right)\right)}{\cosh \! \left(\frac{\mathrm{arctanh}\left(v \right)}{b}\right) \left(1-v^{2}\right)}\,d v
 = \pi \,|b|\,,
\label{Cr2a}
\end{equation}
where
\begin{equation}
B\equiv \tan(\pi\,b/2)\,.
\label{Bdef}
\end{equation}

We now consider the case $s<0$, in which case the limitation $|b|<1/s$ is no longer needed. However, $s<0$ introduces the possibility of new poles in the integrand of \eqref{CR1}, in which case we require that $s=-n$ in order that the resulting residues be well--defined analytically. Beginning with the case $b=2$, we consider successively decreasing values of $s=-1,-2\dots$, all of which are individually known to both Maple and Mathematica. From an examination of the first few residues obtained from this study, it was discovered that the coefficients coincide with a listed OEIS \cite{OEIS} sequence, entry number \seqnum{123746}, suggesting that

\begin{equation}
\int_{-\infty}^{\infty}{\mathrm{sech} \! \left(\frac{\pi \,v}{2}\right) \mathrm{sech}^{n}\left(\pi \,v \right)}\,d v
 = 
-2+2\sqrt{2}\;\overset{n -1}{\underset{k =0}{\sum}}\; \left(-\frac{1}{4}\right)^{k} \binom{2\,k}{k}\,,
\label{Cg}
\end{equation} 
an identity that was numerically tested for other values of $s=-n$, yielding a generalization that does not appear to be known to either of the codes cited above. The case $b=4$ grows ever more complex as $s=-n$ increases, but the residues are easily available from either Maple or Mathematica. As examples, we have

\begin{equation}
\int_{-\infty}^{\infty}\mathrm{sech}\! \left(\frac{\pi \,v}{4}\right) \mathrm{sech}^{3}\, \left(\pi \,v \right)d v
 = 4+\frac{\left(-4\,\sqrt{2}-31\right) \sqrt{2-\sqrt{2}}}{8}=4-\frac{\sqrt{1490-497\,\sqrt{2}}}{8}\,,
\label{B4a}
\end{equation}

and

\begin{equation}
\int_{-\infty}^{\infty}\mathrm{sech}\! \left(\frac{\pi \,v}{4}\right) \mathrm{sech}\! \left(\pi \,v \right)^{4}d v
 = 4+\frac{\left(-33\,\sqrt{2}-103\right) \sqrt{2-\sqrt{2}}}{32} = 4-\frac{\sqrt{11978+809\,\sqrt{2}}}{32}\,.
\label{C4s}
\end{equation}
The first equality in both of the above was obtained by the straightforward evaluation of the Maple--derived residues; the second, and equivalent, equality originates courtesy of Mathematica. Other cases of potential interest include the possibility that $b=1/2$, producing

\begin{equation}
\int_{-1}^{1}\left(v^{2}+1\right)^{-1+{s}/{2}} \left(1-v^{2}\right)^{-{s}/{2}}\,\cos \! \left(s\,\arctan \! \left(v \right)\right)d v
 = 2^{-1+{s}/{2}}\,\pi,\hspace{20pt} s<2\,.
\label{CR2b}
\end{equation}

\begin{rem} Potentially interesting special cases of \eqref{Cr2a} with $s>0$ (e.g. $s=1,b=1/2$) and similar cases of \eqref{CR2b} when $s=-2n$ are known to Maple.
\end{rem}

\subsection{Differentiating}

Differentiating \eqref{CR2b} with respect to $s$ yields the identity
\begin{align} \nonumber
\int_{-1}^{1}\left(v^{2}+1\right)^{{s}/{2}-1}& \left(1-v^{2}\right)^{-{s}/{2}} \left(\ln \! \left(\frac{1-v^{2}}{1+v^{2}}\right) \cos \! \left(s\,\arctan \! \left(v \right)\right)+2\,\arctan \! \left(v \right) \sin \! \left(s\,\arctan \! \left(v \right)\right)\right)d v\\
 &= -{\pi \,2^{1-{s}/{2}}\,\ln \! \left(2\right)}
\label{Cr2bd}
\end{align}
whose special cases $s=0$ and $s=-2$ are known to Maple. In the case that $s=1$ we find
\begin{equation}
\int_{-1}^{1}\frac{2\,v\,\arctan \! \left(v \right)  +\ln \! \left(\frac{1-v^{2}}{1+v^{2}}\right)}{\left(v^{2}+1\right) \sqrt{1-v^{2}}\,}d v
 = -\frac{\pi \,\ln \! \left(2\right)}{\sqrt{2}}\,,
\label{Q1}
\end{equation}
and if $s=-1$ we find
\begin{equation}
\int_{-1}^{1}\frac{\sqrt{1-v^{2}} \left(2\,v\,\arctan \! \left(v \right)  +\ln \! \left(\frac{1+v^{2}}{1-v^{2}}\right)\right)}{\left(v^{2}+1\right)^{2}}d v
 = \frac{\pi \,\sqrt{2}\,\ln \! \left(2\right)}{4}\,,
\label{Q2}
\end{equation}
so that by subtracting and adding the two we also finally find
\begin{equation}
\int_{-1}^{1}\frac{\ln \! \left(\frac{1-v^{2}}{1+v^{2}}\right)+2\, v^{3}\,\arctan \! \left(v \right)}{\sqrt{1-v^{2}} \,\left(v^{2}+1\right)^{2}}d v
 = -\frac{3\,\pi \,\sqrt{2}\,\ln \! \left(2\right)}{8}\,,
\label{Q1mQ2}
\end{equation}
and
\begin{equation}
\int_{-1}^{1}\frac{v^{2}\,\ln \! \left(\frac{1-v^{2}}{1+v^{2}}\right)+2\,v\,\arctan \! \left(v \right) }{\sqrt{1-v^{2}}\, \left(1+v^{2}\right)^{2}}d v
 = -\frac{\pi \,\sqrt{2}\,\ln \! \left(2\right)}{8}\,.
\label{Q1pQ2}
\end{equation}

\subsection{Sine} \label{sec:Sine}

Here we consider the case

\begin{equation}
F(v)\equiv \frac{\sin^{s}\left(\pi  \left(\frac{b}{2}-i\,v \right)\right)}{\cosh \! \left(\frac{\pi \,v}{b}\right)}
\label{S(v)}
\end{equation}
which obeys \eqref{Crit} only if $s=\pm\, 2n$\footnote{In the case $s=2n+1$ we find an example of the condition \eqref{Crit2a} with $h(v)=-2$.}. A superficial study of the case $s=2n$ indicated that the residue of the pole located at $v=-i\,b/2$ vanishes, leading to several simple special case integrals all of which are known to Mathematica. Far more interesting is a consideration of the case $s=-2$, yielding the immediate identity

\begin{equation}
\int_{-\infty}^{\infty}\frac{\cosh \! \left(2\,\pi \,v \right) \cos \! \left(\pi \,b \right)-1}{\cosh \! \left(\frac{\pi \,v}{b}\right) \left(\cosh \! \left(2\,\pi \,v \right)-\cos \! \left(\pi \,b \right)\right)^{2}}\,d v
 = -\frac{1+2\,b^{2}}{12\,\left|b\right|}\,,\hspace{20pt} \left|b\right|<2\,,
\label{Sintm2}
\end{equation}
the validity of which is constrained by the itinerant nature of the poles of the integrand as $b$ increases. From the denominator of \eqref{Sintm2}, in addition to the usual pole at $v=-ib/2$ we see additional poles occurring at two sets of points, labelled by
\begin{equation}
P_{1}\equiv v= -i(b/2 \pm\, n)
\label{P1}
\end{equation} 
and
\begin{equation}
P_{2}\equiv v=-i(n-b/2)\,
\label{P2}
\end{equation}
and must choose $n$ in each case such that the poles reside in the region $\mathfrak{S}_{b}$ for particular values of $b$. A careful study of the first few residues as $b$ increases through increasing values of $2n$ suggests the following (numerically tested) identity

\begin{equation}
\int_{-\infty}^{\infty}\frac{\cosh \! \left(2\,\pi \,v \right) \cos \! \left(\pi \,b \right)-1}{\left(\cosh \! \left(2\,\pi \,v \right)-\cos \! \left(\pi \,b \right)\right)^{2}\,\cosh \! \left(\frac{\pi \,v}{b}\right)}\,d v
 = 
-\frac{1+2\,b^{2}}{12\,{| b |}}+\frac{1}{{| b |}}\overset{{\lceil \frac{{| b |}}{2}\rceil}-1}{\underset{j =1}{\sum}}\; \cot \! \left(\frac{j\,\pi}{b}\right) \csc \! \left(\frac{j\,\pi}{b}\right)\,.
\label{Test2G}
\end{equation}

{\bf Remarks:} If $n=0$, the pole located at $P_{1}$ coincides with the usual pole at $v=-ib/2$, all the singularities become multipoles and the residues located at the points labelled by $P_{2}$ vanish since the poles are of even order. Since Maple was unable to correctly calculate the residues corresponding to the points labelled by $P_{1}$, all the residues were calculated by Mathematica, followed by numerical testing. Additionally, the first few special case integrals $b=n/2$ were known to Maple up to $n=7$, where we find from \eqref{Test2G}
\begin{equation}
\int_{-\infty}^{\infty}\mathrm{sech}\! \left(\frac{2\,\pi \,v}{7}\right) \mathrm{sech}^{2}\! \left(2\,\pi \,v \right)d v
 = 
\frac{17}{28}-\frac{2\,\cot \! \left(\frac{2\,\pi}{7}\right) \csc \! \left(\frac{2\,\pi}{7}\right)}{7}\,;
\label{Shalf7}
\end{equation}
an equivalent test of Mathematica for this integral, yielded an exceedingly complicated expression that defied the Mathematica {\bf FullSimplify} command, although it was numerically correct. Finally, we consider the case $s=-4$ to find

\begin{equation}
\int_{-\infty}^{\infty}\frac{\cos \! \left(2\,\pi \,b \right) \cosh \! \left(4\,\pi \,v \right)-4\,\cosh \! \left(2\,\pi \,v \right) \cos \! \left(\pi \,b \right)+3}{\left(\cosh \! \left(2\,\pi \,v \right)-\cos \! \left(\pi \,b \right)\right)^{4}\,\cosh \! \left(\frac{\pi \,v}{b}\right)}\,d v
 = \frac{88\,b^{4}+40\,b^{2}+7}{720\,b^{3}}\hspace{20pt} |b|<2.
\label{Sint4}
\end{equation}
Further exploration of increasing values of $b$ tested both computer codes to practical limits.

\section{Summary} \label{sec:Sum}

Here we have extended Glasser's Master Theorem by introducing one, and and occasionally two, new parameters and demonstrated the utility of this theorem by evaluating (and numerically testing) a number of interesting improper integrals that mostly, but not exclusively, involve Riemann's Zeta function. It appears that the identities so-established could be generalized to a large number of other special functions, also leading to what appears to be a new technique of evaluating integrals by pairing unknown with known partners living in different regimes of analytic convergence. A number of more general possibilities were presented but not pursued -- for example, \eqref{Fgeneral} introduces two parameters $s$ and $p$, but only the simplest values for these parameters were examined. In several places, the parameter $s$, employed as a integer variable can be (numerically) generalized to real, and sometimes complex values, but the analysis is not straightforward. It is suggested that an investigation of non-integer values of $s$ as it arises here would likely be fruitful. Finally, it seems obvious that a study of the generalization of the parameter $a$ into the complex domain as was touched on in Section \ref{sec:CmplxA} is worthy of further study.

\bibliographystyle{unsrt}
\bibliography{c://physics//biblio}
\appendix
\section{Appendix A} \label{sec:AppA}

The following are the residues for $n\leq 4$ utilized in section \ref{sec:GenX}, labelled as $R_{n,j}$ where $j=1$ labels the residue emanating from the denominator term of \eqref{Ex6} and $j=2,3$ refers to residues generated by each of the two numerator terms. Also, $A\equiv \pi\,(a-1)/b$.

\begin{itemize}
\item{if $b\neq 0$}
\begin{equation}
R_{\,n,1}\equiv \,\frac{2\,i\, b}{\pi}\zeta \! \left(a +\frac{b}{2}\right)^n\,;
\label{Resnn}
\end{equation}
\item{if $n=1$ and $0<a<1$ and $a+b>1$ then}
\begin{equation}
R_{\,1,\,2}=R_{\,1,\,3}\equiv\, -\frac{i}{\cos \! \left(A \right)}\,;
\label{Resn1}
\end{equation}

\item{if $n=2$ and $0<a<1$ and $a+b>1$ then}
\begin{equation}
R_{\,2,\,2}=R_{\,2,\,3}\equiv \,i\left(-\frac{2\,\gamma}{\cos \! \left(A \right)}+\frac{\pi \,\sin \! \left(A \right)}{b\,\cos ^{2}\! \left(A \right)}\right)\,;
\label{Resn2}
\end{equation}

\item{if $n=3$ and $0<a<1$ and $a+b>1$ then}
\begin{equation}
R_{\,3,\,2}=R_{\,3,\,3}\equiv \,i\left(\frac{3\,\gamma \! \left(1\right)-3\,\gamma^{2}+{\pi^{2}}/{(2\,b^{2})}}{\cos \! \left(A \right)}+\frac{3\,\sin \! \left(A \right) \pi \,\gamma}{b\,\cos^{2} \! \left(A \right)}-\frac{\pi^{2}}{b^{2}\,\cos^{3} \! \left(A \right)}\right)\,;
\label{Resn3}
\end{equation}

\item{if $n=4$ and $0<a<1$ and $a+b>1$ then}
\begin{align} \nonumber
R_{\,4,\,2}=R_{\,4,\,3}\equiv\,i &\left(\frac{12\,\gamma \! \left(1\right) \gamma -2\,\gamma \! \left(2\right)-4\,\gamma^{3}+{2\,\gamma \,\pi^{2}}/{b^{2}}}{\cos \! \left(A \right)}-\frac{\pi \,\sin \! \left(A \right) \left(-36\,\gamma^{2}\,b^{2}+24\,\gamma \! \left(1\right) b^{2}+\pi^{2}\right)}{6\,b^{3}\,\cos^{2} \! \left(A \right)}\right. \\& \left.
\hspace{25pt}-\frac{4\,\pi^{2}\,\gamma}{b^{2}\,\cos ^{3}\! \left(A \right)}+\frac{\pi^{3}\,\sin \! \left(A \right)}{b^{3}\,\cos^{4} \! \left(A \right)}\right)\,.
\label{Resn4}
\end{align}
\end{itemize}

\section{Appendix B -- Proof of \texorpdfstring{\eqref{JaJbx}}{Lg} } \label{sec:AppB}
%

By writing $\zeta \! \left(a+b-i\,v  \right) = \overset{\infty}{\underset{j =1}{\sum}}\; {j^{-a-b+i\,v }}$ and interchanging the sum and integration because both are convergent when $a+b>1$, we obtain

\begin{equation}
\int_{-\infty}^{\infty}\frac{\zeta \! \left(a +b-i\,v  \right)}{ a +b-i\,v}d v=\overset{\infty}{\underset{j =1}{\sum}}\! \frac{1}{j^{b +a}}\int_{-\infty}^{\infty}\left(\frac{\cos \left(v\,\ln \left(j \right)\right) \left(b +a \right)}{v^{2}+\left(b +a \right)^{2}}-\frac{\sin \left(v\,\ln \left(j \right)\right) v}{v^{2}+\left(b +a \right)^{2}}\right)d v,
\label{JaJb}
\end{equation}
after noting that the imaginary parts of the integrand vanish by asymmetry. Each of the two integrals can be evaluated as

\begin{equation}
J_{1}\equiv\left(b +a \right)\int_{-\infty}^{\infty}\frac{\cos \! \left(v\,\ln \! \left(j \right)\right)}{v^{2}+\left(b +a \right)^{2}}d v 
 = \pi \,j^{-b -a}
\label{J1cos}
\end{equation}

and

\begin{equation}
J_{2}\equiv \int_{-\infty}^{\infty}\frac{\sin \! \left(v\,\ln \! \left(j \right)\right) v}{v^{2}+\left(b +a \right)^{2}}d v
 = \pi \,j^{-b -a},  \hspace{20pt} j>1,
\label{J1Sin} 
\end{equation}
by recourse to listed integrals \cite[Eqs. 3.723(2) and 3.723(4)]{G&R}. Therefore, from \eqref{JaJb} we find
\begin{equation}
\pi\;  \overset{\infty}{\underset{j =1}{\sum}}\; j^{-2\,a +2\,b}-\pi \; \overset{\infty}{\underset{j =2}{\sum}}\; j^{-2\,a +2\,b}
 = 
\pi \,\zeta \! \left(2(a+b) \right)-\pi  \left(\zeta \! \left(2\,a +2\,b \right)-1\right)=\pi.
\label{J1CmS}
\end{equation}

\end{flushleft}
\end{document}